\documentclass[11pt,a4paper]{article}
\usepackage[a4paper,margin=2.15cm]{geometry}
\usepackage{amsmath,amssymb,amsthm,mathtools,bm}
\usepackage{booktabs,tabularx,array,multirow}
\usepackage{enumitem}
\usepackage{microtype}
\usepackage{xcolor}
\usepackage[hidelinks]{hyperref}
\usepackage{fancyhdr}
\usepackage{url}

\numberwithin{equation}{section}

\newtheorem{assumption}{Assumption}[section]
\newtheorem{lemma}[assumption]{Lemma}
\newtheorem{proposition}[assumption]{Proposition}
\newtheorem{theorem}[assumption]{Theorem}
\newtheorem{corollary}[assumption]{Corollary}
\theoremstyle{remark}
\newtheorem{remark}[assumption]{Remark}

\newcommand{\R}{\mathbb{R}}
\newcommand{\norm}[1]{\left\lVert #1\right\rVert}

\newcommand{\co}{\operatorname{co}}
\newcommand{\diag}{\operatorname{diag}}

\newcommand{\cL}{\mathcal{L}}
\newcommand{\cD}{\mathcal{D}}

\newcommand{\eps}{\varepsilon}
\newcommand{\dd}{\,\mathrm d}

\title{\bfseries
Preconditioned Three-Term Conjugate Gradient Framework for Nonconvex Finite Minimax Problems}
\author{Wenzhe Zhao\\
\small National Center for Applied Mathematics in Chongqing, Chongqing Normal University}
\date{}

\begin{document}
\maketitle

\begin{abstract}
This paper develops a hyperbolic-majorization preconditioned three-term nonlinear conjugate-gradient framework for nonconvex finite minimax optimization.
An analytic symmetric positive definite metric is derived from a global quadratic majorization of the hyperbolic smoothing model and is used simultaneously as a curvature absorber, a preconditioner, and the line-search energy metric.
With the displacement \(s_{k-1}\) fixed and a variable curvature response \(b_k\), an enhanced three-term direction is introduced together with an adaptive parameter \(\mu_k^\star\) that is maximal under the requirement that the baseline worst-case metric-energy constant be preserved.
The resulting framework yields a Dai--Liao-type conjugacy relation, enhanced sufficient descent, a smoothing-parameter-uniform Armijo lower bound, fixed-smoothing global first-order convergence and complexity, and Clarke-stationary accumulation points under continuation.
\end{abstract}

\noindent\textbf{Keywords.}
finite minimax optimization; nonconvex optimization; hyperbolic smoothing;
three-term conjugate gradient; sufficient descent; Dai--Liao conjugacy;
majorization metric; Armijo line search.

\medskip
\noindent\textbf{MSC 2020.}
90C47; 90C30; 65K05.

\section{Introduction}

We consider the finite minimax problem
\begin{equation}
\min_{x\in\R^n} f(x),
\qquad
f(x):=\max_{1\le i\le m} f_i(x),
\label{eq:minimax}
\end{equation}
where the component functions \(f_i\) are continuously differentiable but are not assumed to be convex.
Even when all \(f_i\) are smooth, the maximum in \eqref{eq:minimax} is generally nonsmooth at switching points.
Finite-max and minimax structures occur in robust design, worst-case modeling, multiobjective scalarization, learning, and engineering optimization; recent work has also emphasized smooth reformulations and active-component identification for large finite-max models \cite{RasTamUteda2025}.

A classical strategy for \eqref{eq:minimax} is smoothing.
Exponential smoothing for minimax optimization goes back at least to Xu \cite{Xu2001}, and adaptive smoothing schemes were developed by Polak, Royset and Womersley \cite{PolakRoysetWomersley2003}.
Hyperbolic smoothing was studied systematically by Bagirov, Al Nuaimat and Sultanova \cite{Bagirov2013}, with subsequent comparisons between local and global smoothing \cite{Bagirov2018}.
For conjugate-gradient approaches specifically, Pang, Du and Ju \cite{PangDuJu2016} proposed a smoothing Fletcher--Reeves method, Liu and Zheng \cite{LiuZheng2020} developed a smoothing iterative method based on exponential penalty, and Guo and Wan \cite{GuoWan2022} proposed a smoothing three-term conjugate-gradient method for finite minimax problems.
More recent hyperbolic-smoothing developments include a truncated Newton method for large-component finite minimax problems \cite{ZhaoTang2026} and modified L-BFGS constructions \cite{Zhao2026LBFGS}.
These works motivate the question of how the curvature induced by smoothing itself can be exploited without introducing a full Hessian or a dense quasi-Newton matrix.

Nonlinear conjugate-gradient (NCG) methods are attractive for large-scale smooth optimization because of their low storage requirements.
The classical lineage includes Hestenes--Stiefel \cite{HestenesStiefel1952}, Fletcher--Reeves \cite{FletcherReeves1964}, and globally convergent variants such as Dai--Yuan \cite{DaiYuan1999}.
Global convergence mechanisms for nonlinear CG were studied, among others, by Gilbert and Nocedal \cite{GilbertNocedal1992}.
Dai and Liao \cite{DaiLiao2001} introduced a generalized conjugacy condition compatible with inexact line searches, while Hager and Zhang \cite{HagerZhang2005} designed a nonlinear CG method with guaranteed descent and an efficient line search.

Three-term CG methods provide additional freedom for enforcing sufficient descent and conjugacy.
Zhang, Zhou and Li \cite{ZhangZhouLi2007} developed descent three-term methods for nonconvex unconstrained optimization.
Narushima, Yabe and Ford \cite{NarushimaYabeFord2011} proposed a general three-term form whose search directions satisfy an exact sufficient-descent identity independently of the line search, and they further gave a multistep quasi-Newton instance.
Andrei \cite{Andrei2013Simple} derived a simple three-term method satisfying both descent and conjugacy conditions.
Subsequent work studied spectral-scaling and matrix viewpoints, parameter optimization, and adaptive conjugacy: see, for example, Dong, Liu and He \cite{DongLiuHe2015}, Babaie-Kafaki and Ghanbari \cite{BabaieGhanbari2015}, Dong et al. \cite{DongHanDai2018,DongDaiGhanbari2021}, and Andrei's adaptive matrix-based construction \cite{Andrei2016}.
Recent developments include a sufficiently descending three-term method of Awwal et al. \cite{Awwal2025}, restart-based hybrid three-term methods \cite{WangGao2025}, complexity guarantees for a class of nonconvex three-term methods under Wolfe and Armijo line searches \cite{Wang2026}, a matrix extension of the Dai--Liao parameter \cite{BabaieKafakiMatrixDL2026}, and a general three-term framework for vector optimization \cite{LinZeng2026}.
Thus, three-term CG, sufficient descent, generalized conjugacy, matrix parameterizations, and parameter optimization are established themes in the literature and are not claimed as individually new here.

The present work focuses on a different coupling.
For the hyperbolic smoothing of \eqref{eq:minimax}, the scalar smoothing curvature is \(O(\tau^{-1})\) near switching surfaces when the smoothing parameter \(\tau\downarrow0\).
A Euclidean Lipschitz-gradient estimate may therefore lead to line-search constants that deteriorate with \(\tau\).
Instead, we derive directly from the hyperbolic model a structured metric
\[
P_k
=
\delta I+\bar L\Pi_x+\frac12J_k^T D_k^{\rm hyp}J_k
\]
that globally majorizes the whole smoothed objective at the current point.
The same \(P_k\) is then used in the three-term direction.
The main direction keeps the classical CG displacement \(s_{k-1}\) fixed, while allowing a curvature-response vector \(b_k\) to vary.
A nonnegative enhancement parameter \(\mu_k\) is added to the exact-cancellation three-term structure, and an adaptive value \(\mu_k^\star\) is obtained from a precise metric-energy budget.

The main contributions are summarized as follows.
\begin{enumerate}[label=(\roman*),leftmargin=8mm]
\item
For the hyperbolic smoothing model, we derive an analytic SPD matrix \(P_k\) and prove a global quadratic upper model valid for every trial displacement.
Under \(C^2\) component functions, \(P_k\) also strictly dominates the current smoothing Hessian in the Loewner order.
\item
We introduce a fixed-displacement/variable-curvature three-term NCG family
\[
d_k=-P_k^{-1}g_k+\beta_kd_{k-1}-\gamma_kP_k^{-1}b_k,
\]
where \(b_k\) may represent a secant, Hessian-vector, quasi-Newton, finite-difference, or historical curvature response.
The direction satisfies an enhanced sufficient-descent identity; with the regularized curvature vector it also satisfies a generalized conjugacy relation, which becomes a modified Dai--Liao relation for the standard choice \(b_k=y_{k-1}\).
\item
We derive an adaptive parameter \(\mu_k^\star\) as the largest nonnegative enhancement for which the original worst-case \(P_k\)-energy constant is unchanged.
Hence the enhanced branch can produce strictly more descent than the exact-cancellation branch without weakening the baseline theoretical Armijo constant.
\item
Combining the direction estimate with the global majorizer yields an Armijo backtracking lower bound independent of \(\tau\).
For each fixed \(\tau>0\), compactness of the initial sublevel set implies
\(\|\nabla\Phi_\tau(z_k)\|\to0\), together with an \(O(\eps^{-2})\) first-order iteration bound.
\item
For a continuation sequence \(\tau_j\downarrow0\) with vanishing inner stationarity tolerances, every accumulation point is Clarke stationary for the original finite minimax problem.
We also record a broader \((a_k,b_k)\) extension and several admissible variations as remarks, while keeping the main algorithm within the standard three-term NCG recurrence.
\end{enumerate}

The paper is organized as follows.
Section~\ref{sec:prelim} gives the minimax reformulation and hyperbolic smoothing model.
Section~\ref{sec:majorizer} derives the majorization metric.
Section~\ref{sec:direction} develops the variable-curvature three-term direction and its conjugacy and descent properties.
Section~\ref{sec:mu} derives the energy-budget parameter.
Sections~\ref{sec:algorithm}--\ref{sec:continuation} establish the Armijo, convergence, complexity, and continuation results.
Section~\ref{sec:instances} gives concrete curvature-response instances, Section~\ref{sec:implementation} discusses implementation, and Section~\ref{sec:numerics} is reserved for numerical experiments.

\section{Finite minimax reformulation and hyperbolic smoothing}
\label{sec:prelim}

We use the Euclidean inner product and norm.
For a symmetric matrix \(M\), \(M\succ0\) and \(M\succeq0\) denote positive definiteness and semidefiniteness.
The active index set of \eqref{eq:minimax} is
\[
I(x):=\{i:f_i(x)=f(x)\}.
\]
Since the maximum of finitely many \(C^1\) functions is locally Lipschitz,
\begin{equation}
\partial_C f(x)
=
\co\{\nabla f_i(x):i\in I(x)\}.
\label{eq:clarke}
\end{equation}

\begin{assumption}[Component smoothness]
\label{ass:smooth}
For every \(i\in\{1,\ldots,m\}\), \(f_i\in C^1(\R^n)\), and there exists \(L_i\ge0\) such that
\[
\norm{\nabla f_i(x)-\nabla f_i(y)}
\le
L_i\norm{x-y},
\qquad x,y\in\R^n.
\]
Set
\[
\bar L:=\sum_{i=1}^m L_i.
\]
\end{assumption}

\begin{remark}
The global Lipschitz statement is used for a clean global majorization theorem.
The same analysis only needs valid Lipschitz constants on an open convex set containing all accepted iterates and all line-search trial segments.
\end{remark}

Introduce \(z=(x,t)\in\R^{n+1}\) and
\[
\Pi_x:=\diag(I_n,0).
\]
The positive-part reformulation is
\begin{equation}
F(x,t)
=
t+\sum_{i=1}^m[f_i(x)-t]_+.
\label{eq:F}
\end{equation}

\begin{proposition}[Exact auxiliary-variable reformulation]
\label{prop:reform}
For every \(x\in\R^n\),
\[
f(x)=\min_{t\in\R}F(x,t),
\qquad
F(x,f(x))=f(x).
\]
\end{proposition}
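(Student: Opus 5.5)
Fix \(x\in\R^n\) and write \(a_i:=f_i(x)\) and \(\varphi(t):=F(x,t)=t+\sum_{i=1}^m[a_i-t]_+\). This is a convex, piecewise-linear function of one real variable. The plan is to show directly that \(\varphi\) attains its minimum at \(t=f(x)=\max_i a_i\) and that the minimum value equals \(f(x)\). I would do this with an elementary two-sided comparison rather than with subgradient calculus.

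\emph{Step 1 (value at \(t=f(x)\)).} For every \(i\) we have \(a_i-f(x)\le0\), so each positive part vanishes and \(\varphi(f(x))=f(x)\). This is the second identity of the statement, and it also shows \(\min_t\varphi(t)\le f(x)\).

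\emph{Step 2 (lower bound).} For arbitrary \(t\in\R\) we show \(\varphi(t)\ge f(x)\). Choose an index \(j\in I(x)\), so that \(a_j=f(x)\). Since every term of the sum is nonnegative, we can discard all terms except the \(j\)-th, which gives
\[
\varphi(t)\ge t+[a_j-t]_+\ge t+(a_j-t)=f(x).
\]
Here we use only the inequality \([s]_+\ge s\). Combining this with Step 1, the infimum is attained at \(t=f(x)\) and equals \(f(x)\), so the minimum in the statement exists and has the claimed value.

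There is essentially no obstacle; the only point requiring care is that the minimum is attained, and Step 1 supplies an explicit minimizer. If a stronger statement about the full minimizer set were wanted, one could remark that \(\varphi\) has slope \(1-\#\{i:a_i>t\}\), which is \(\le 0\) for \(t<f(x)\) and equal to \(1\) for \(t>f(x)\). This is not needed for the proposition.
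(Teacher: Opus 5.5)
Your proof is correct and essentially the paper's argument: pick an active index, discard the other nonnegative positive-part terms, and use \(t=f(x)\) as an explicit minimizer. The only difference is that your uniform bound \([s]_+\ge s\) removes the paper's case split between \(t\ge f(x)\) and \(t<f(x)\).
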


\begin{proof}
Let \(M=f(x)\).
If \(t\ge M\), then \(F(x,t)=t\ge M\).
If \(t<M\), choose \(i^\star\) with \(f_{i^\star}(x)=M\); then
\[
F(x,t)\ge t+[M-t]_+=M.
\]
Equality holds at \(t=M\).
\end{proof}

For \(\tau>0\), define the hyperbolic smoothing of the positive part by
\begin{equation}
\phi_\tau(r)
=
\frac{r+\sqrt{r^2+\tau^2}}2
\label{eq:phi}
\end{equation}
and the smooth objective
\begin{equation}
\Phi_\tau(x,t)
=
t+\sum_{i=1}^m\phi_\tau(f_i(x)-t).
\label{eq:Phi}
\end{equation}

\begin{lemma}[Approximation of the positive part]
\label{lem:approx}
For all \(r\in\R\),
\[
[r]_+
\le
\phi_\tau(r)
\le
[r]_++\frac{\tau}{2}.
\]
Consequently,
\[
F(x,t)
\le
\Phi_\tau(x,t)
\le
F(x,t)+\frac{m\tau}{2},
\]
and
\[
f(x)
\le
\inf_t\Phi_\tau(x,t)
\le
f(x)+\frac{m\tau}{2}.
\]
\end{lemma}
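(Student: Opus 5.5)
The plan is to prove the scalar inequality first and then obtain the two consequences by summing over the components and minimizing over \(t\).

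\emph{Scalar bound.} Write
\[
\phi_\tau(r)-[r]_+=\frac{\sqrt{r^2+\tau^2}-|r|}{2},
\]
which follows from \(r=[r]_+-[-r]_+\) and \(|r|=[r]_++[-r]_+\), so that \(\tfrac{r}{2}-[r]_+=-\tfrac{|r|}{2}\).

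For the lower bound, note that \(\sqrt{r^2+\tau^2}\ge |r|\).

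For the upper bound, rationalize:
\[
\sqrt{r^2+\tau^2}-|r|=\frac{\tau^2}{\sqrt{r^2+\tau^2}+|r|}\le \frac{\tau^2}{\tau}=\tau,
\]
since the denominator is at least \(\tau\). Dividing by \(2\) gives \([r]_+\le\phi_\tau(r)\le[r]_++\tau/2\).

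\emph{Bounds on \(\Phi_\tau\).} Apply the scalar bound with \(r=f_i(x)-t\) for each \(i\). Summing over \(i=1,\dots,m\) and adding \(t\) to both sides, the definitions \eqref{eq:F} and \eqref{eq:Phi} give
\[
F(x,t)\le\Phi_\tau(x,t)\le F(x,t)+\frac{m\tau}{2}.
\]

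\emph{Bounds on \(\inf_t\Phi_\tau\).} Take the infimum over \(t\) in the last display; infima preserve both inequalities, and the additive constant \(m\tau/2\) passes through. Then invoke Proposition~\ref{prop:reform}, which gives \(\inf_tF(x,t)=\min_tF(x,t)=f(x)\). This yields
\[
f(x)\le\inf_t\Phi_\tau(x,t)\le f(x)+\frac{m\tau}{2}.
\]

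There is no real obstacle. The only point needing care is the upper scalar bound, where the rationalization must be done against \(|r|\) rather than \(r\) so that the argument covers both signs of \(r\) at once.
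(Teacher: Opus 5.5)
Your proposal is correct and follows essentially the same route as the paper: the lower bound from \(\sqrt{r^2+\tau^2}\ge|r|\), then summation over \(i\) and Proposition~\ref{prop:reform} for the infimum over \(t\). The only cosmetic difference is that the paper gets the upper scalar bound directly from \(\sqrt{r^2+\tau^2}\le|r|+\tau\), whereas you rationalize \(\sqrt{r^2+\tau^2}-|r|\); both are equally valid one-line arguments.
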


\begin{proof}
Since \(\sqrt{r^2+\tau^2}\ge |r|\),
\(\phi_\tau(r)\ge(r+|r|)/2=[r]_+\).
Also,
\(\sqrt{r^2+\tau^2}\le |r|+\tau\), which gives the upper bound.
The remaining statements follow by summation and Proposition~\ref{prop:reform}.
\end{proof}

Define the residuals and hyperbolic weights
\begin{equation}
r_i(z)=f_i(x)-t,
\qquad
\omega_i(z,\tau)=\sqrt{r_i(z)^2+\tau^2},
\label{eq:res}
\end{equation}
\begin{equation}
a_i(z,\tau)
=
\phi_\tau'(r_i(z))
=
\frac12\left(1+\frac{r_i(z)}{\omega_i(z,\tau)}\right),
\qquad
c_i(z,\tau)
=
\phi_\tau''(r_i(z))
=
\frac{\tau^2}{2\omega_i(z,\tau)^3}.
\label{eq:weights}
\end{equation}
Then \(0<a_i<1\).
Let
\[
q_i(x)=
\begin{pmatrix}
\nabla f_i(x)\\ -1
\end{pmatrix},
\qquad
J(z)=
\begin{pmatrix}
q_1(x)^T\\ \vdots\\ q_m(x)^T
\end{pmatrix}.
\]

\begin{proposition}[Gradient and Hessian structure]
\label{prop:grad-hess}
Under Assumption~\ref{ass:smooth},
\begin{equation}
\nabla\Phi_\tau(z)
=
\begin{pmatrix}
\displaystyle\sum_{i=1}^m a_i(z,\tau)\nabla f_i(x)\\[1mm]
\displaystyle 1-\sum_{i=1}^m a_i(z,\tau)
\end{pmatrix}.
\label{eq:gradient}
\end{equation}
If, in addition, all \(f_i\in C^2\), then
\begin{equation}
\nabla^2\Phi_\tau(z)
=
\begin{pmatrix}
\displaystyle\sum_{i=1}^m a_i(z,\tau)\nabla^2f_i(x)&0\\
0&0
\end{pmatrix}
+
J(z)^TC(z,\tau)J(z),
\label{eq:hessian}
\end{equation}
where
\[
C(z,\tau)=\diag(c_1(z,\tau),\ldots,c_m(z,\tau)).
\]
\end{proposition}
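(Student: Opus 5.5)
The plan is a direct chain-rule computation, organized through the residual map \(r(z)=(r_1(z),\ldots,r_m(z))^T\). First I record that \(\phi_\tau\) is \(C^\infty\) on \(\R\) for \(\tau>0\), because \(\omega=\sqrt{r^2+\tau^2}\ge\tau>0\). Differentiating \eqref{eq:phi} once and twice gives \(\phi_\tau'(r)=\tfrac12(1+r/\omega)\), which is \(a_i\) in \eqref{eq:weights}. For the second derivative I use \(\frac{d}{dr}(r/\omega)=(\omega^2-r^2)/\omega^3=\tau^2/\omega^3\), so \(\phi_\tau''(r)=\tau^2/(2\omega^3)\), which is \(c_i\). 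Since \(|r|<\omega\), we get \(0<a_i<1\).

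For the gradient, Assumption~\ref{ass:smooth} gives \(r_i\in C^1(\R^{n+1})\) with \(\nabla r_i(z)=(\nabla f_i(x);-1)=q_i(x)\). The chain rule applied to each \(\phi_\tau(r_i(z))\) then gives \(\nabla\Phi_\tau(z)=e_{n+1}+\sum_i a_i(z,\tau)q_i(x)\), where \(e_{n+1}=(0;1)\) comes from the linear term \(t\). Reading off the \(x\)-block and the \(t\)-block yields \eqref{eq:gradient}. Equivalently, \(\nabla\Phi_\tau=e_{n+1}+J(z)^Ta(z,\tau)\).

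For the Hessian, I assume \(f_i\in C^2\) and differentiate \(\sum_i a_i(z,\tau)q_i(x)\) by the product rule. The first piece is \(\nabla a_i=\phi_\tau''(r_i)\nabla r_i=c_iq_i\), which contributes \(c_i\,q_iq_i^T\). The second piece is \(a_i\) times the Jacobian of \(q_i\) with respect to \(z\). That Jacobian is the block matrix with \(\nabla^2 f_i(x)\) in the upper-left block and zeros elsewhere, because the last component of \(q_i\) is the constant \(-1\) and \(q_i\) does not depend on \(t\). Summing over \(i\), I use \(\sum_i c_iq_iq_i^T=J^TCJ\) and collect the \(a_i\nabla^2 f_i\) terms into the \(x\)-block, which gives \eqref{eq:hessian}. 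Symmetry is immediate since each term is symmetric.

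There is no real obstacle here. The only points needing care are checking that \(\omega_i>0\), so that everything is smooth without any case split at \(r_i=0\) (unlike \([\cdot]_+\)), and the bookkeeping that the \(t\)-row and \(t\)-column of the first Hessian term vanish.
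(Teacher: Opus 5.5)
Your proposal is correct and follows essentially the same route as the paper: the chain rule for the gradient, then a second differentiation to get the Hessian. The paper writes out the \(xx\), \(xt\), \(tt\) blocks separately before assembling them into \(J^TCJ\) plus the weighted block term; your product-rule computation in the vector form \(\sum_i a_iq_i\) is only a more compact bookkeeping of the same calculation.
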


\begin{proof}
The gradient formula follows from the chain rule.
Differentiating once more yields
\[
\nabla^2_{xx}\Phi_\tau
=
\sum_i a_i\nabla^2f_i
+
\sum_i c_i\nabla f_i\nabla f_i^T,
\quad
\nabla^2_{xt}\Phi_\tau=-\sum_i c_i\nabla f_i,
\quad
\partial^2_{tt}\Phi_\tau=\sum_i c_i,
\]
which is exactly \eqref{eq:hessian}.
\end{proof}

\begin{remark}
The scalar hyperbolic curvature satisfies
\[
\max_r \phi_\tau''(r)=\frac1{2\tau}.
\]
Thus Euclidean curvature estimates can grow like \(O(\tau^{-1})\) near switching surfaces.
The metric constructed next is designed to absorb this curvature directly.
\end{remark}

\section{The hyperbolic global-majorization metric}
\label{sec:majorizer}

Fix an inner smoothing parameter \(\tau>0\) and an iterate
\(z_k=(x_k,t_k)\).
Write
\[
r_{i,k}=f_i(x_k)-t_k,
\qquad
\omega_{i,k}=\sqrt{r_{i,k}^2+\tau^2},
\]
and let \(J_k=J(z_k)\).
Define
\begin{equation}
D_k^{\rm hyp}
=
\diag(\omega_{1,k}^{-1},\ldots,\omega_{m,k}^{-1}).
\label{eq:Dhyp}
\end{equation}
For a fixed regularization \(\delta>0\), set
\begin{equation}
\boxed{
P_k
=
\delta I_{n+1}
+
\bar L\Pi_x
+
\frac12J_k^TD_k^{\rm hyp}J_k.
}
\label{eq:Pk}
\end{equation}

\begin{proposition}[Positive definiteness]
\label{prop:spd}
For every \(k\) and every \(\tau>0\),
\[
P_k\succeq\delta I_{n+1}\succ0.
\]
\end{proposition}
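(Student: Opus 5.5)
The plan is to show that the two terms added to \(\delta I_{n+1}\) in \eqref{eq:Pk} are each positive semidefinite, and then use the fact that adding positive semidefinite matrices preserves the Loewner order. We argue through the quadratic form: for an arbitrary \(v=(u,s)\in\R^{n+1}\), with \(u\in\R^n\) and \(s\in\R\), we compute \(v^TP_kv\) term by term and show it is at least \(\delta\norm{v}^2\).

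First, \(\Pi_x=\diag(I_n,0)\) is an orthogonal projector, so \(v^T\Pi_xv=\norm{u}^2\ge0\). Since \(\bar L=\sum_iL_i\ge0\) by Assumption~\ref{ass:smooth}, this gives \(v^T(\bar L\Pi_x)v\ge0\).

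Second, the diagonal entries of \(D_k^{\rm hyp}\) are \(\omega_{i,k}^{-1}\), where \(\omega_{i,k}=\sqrt{r_{i,k}^2+\tau^2}\ge\tau>0\). Hence \(D_k^{\rm hyp}\) is a well-defined positive definite diagonal matrix for every \(\tau>0\), independently of the residuals. It follows that
\[
v^TJ_k^TD_k^{\rm hyp}J_kv=\sum_{i=1}^m\omega_{i,k}^{-1}\bigl(q_i(x_k)^Tv\bigr)^2\ge0 .
\]
Adding the three contributions gives \(v^TP_kv\ge\delta\norm{v}^2\), which is \(P_k\succeq\delta I_{n+1}\). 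Finally, \(\delta>0\) gives \(\delta I_{n+1}\succ0\).

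There is no real obstacle here. The only point needing care is that the claim must hold uniformly in \(\tau>0\) and in \(k\). This comes from the lower bound \(\omega_{i,k}\ge\tau>0\), which keeps \(D_k^{\rm hyp}\) finite and nonnegative, together with the fact that the bound \(\delta I_{n+1}\) does not depend on \(\tau\). Symmetry of \(P_k\) is immediate from the form of each term.
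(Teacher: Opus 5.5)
Your proof is correct and follows the same approach as the paper, which simply observes that \(\bar L\Pi_x\) and \(J_k^TD_k^{\rm hyp}J_k\) are positive semidefinite. You verify both facts explicitly through the quadratic form, and you also note that \(\omega_{i,k}\ge\tau>0\) makes \(D_k^{\rm hyp}\) well defined; nothing further is needed.
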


\begin{proof}
Both \(\bar L\Pi_x\) and \(J_k^TD_k^{\rm hyp}J_k\) are positive semidefinite.
\end{proof}

\begin{theorem}[Pointwise Loewner domination]
\label{thm:loewner}
Suppose Assumption~\ref{ass:smooth} holds and \(f_i\in C^2\).
Then
\[
P_k-\nabla^2\Phi_\tau(z_k)
\succeq
\delta I_{n+1}.
\]
\end{theorem}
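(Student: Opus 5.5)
We subtract the Hessian formula \eqref{eq:hessian} from \eqref{eq:Pk} and split the difference into two blocks:
\[
P_k-\nabla^2\Phi_\tau(z_k)-\delta I_{n+1}
=
\underbrace{\begin{pmatrix}\bar L I_n-\sum_i a_{i,k}\nabla^2 f_i(x_k)&0\\0&0\end{pmatrix}}_{=:A_k}
+
\underbrace{J_k^T\bigl(\tfrac12D_k^{\rm hyp}-C_k\bigr)J_k}_{=:B_k},
\]
where \(a_{i,k}=a_i(z_k,\tau)\) and \(C_k=C(z_k,\tau)\). It then suffices to show \(A_k\succeq0\) and \(B_k\succeq0\).

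For \(A_k\), note that Assumption~\ref{ass:smooth} together with \(f_i\in C^2\) gives \(\|\nabla^2 f_i(x)\|\le L_i\). Indeed, the Lipschitz bound on \(\nabla f_i\) forces every directional derivative of \(\nabla f_i\) to have norm at most \(L_i\), so \(-L_iI\preceq\nabla^2 f_i\preceq L_iI\). Since \(0<a_{i,k}<1\), we get
\[
\sum_i a_{i,k}\nabla^2 f_i(x_k)\preceq \sum_i a_{i,k}L_i I_n\preceq \bar L I_n,
\]
which shows that the upper-left block of \(A_k\) is PSD.

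For \(B_k\), the scalar comparison is \(\tfrac12\omega_{i,k}^{-1}\ge c_{i,k}=\tau^2/(2\omega_{i,k}^3)\). This is equivalent to \(\omega_{i,k}^2\ge\tau^2\), which holds because \(\omega_{i,k}^2=r_{i,k}^2+\tau^2\). Hence \(\tfrac12D_k^{\rm hyp}-C_k\) is a diagonal matrix with nonnegative entries, and the congruence \(J_k^T(\cdot)J_k\) preserves positive semidefiniteness. Adding \(A_k\) and \(B_k\) gives the claim.

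The only step needing care is the Hessian norm bound derived from gradient Lipschitzness. I would argue it via \(\nabla^2 f_i(x)v=\lim_{h\to0}(\nabla f_i(x+hv)-\nabla f_i(x))/h\), whose norm is at most \(L_i\|v\|\). Symmetry of the Hessian then converts this operator-norm bound into the two-sided Loewner bound. Everything else is direct substitution.
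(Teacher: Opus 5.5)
Your proof is correct and matches the paper's argument: you use \(0<a_{i,k}<1\) and the Lipschitz-induced bound \(\nabla^2 f_i\preceq L_iI\) to get \(\sum_i a_{i,k}\nabla^2 f_i(x_k)\preceq\bar L I_n\), and the scalar inequality \(\tfrac{1}{2\omega_{i,k}}-\tfrac{\tau^2}{2\omega_{i,k}^3}=\tfrac{r_{i,k}^2}{2\omega_{i,k}^3}\ge0\) for the \(J_k^T(\cdot)J_k\) part. Your difference-quotient and symmetry justification of \(\nabla^2 f_i\preceq L_iI\) spells out a step the paper only asserts.
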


\begin{proof}
Since \(\nabla f_i\) is \(L_i\)-Lipschitz and \(f_i\in C^2\),
\(\nabla^2f_i(x_k)\preceq L_iI\).
Because \(0<a_{i,k}<1\),
\[
\sum_i a_{i,k}\nabla^2f_i(x_k)
\preceq
\sum_i a_{i,k}L_iI
\preceq
\bar LI.
\]
Moreover,
\[
\frac1{2\omega_{i,k}}
-
\frac{\tau^2}{2\omega_{i,k}^3}
=
\frac{r_{i,k}^2}{2\omega_{i,k}^3}
\ge0.
\]
Combining this inequality with Proposition~\ref{prop:grad-hess} gives the result.
\end{proof}

The pointwise Hessian comparison is informative, but line search requires a function-value estimate valid for an arbitrary trial step.
The next scalar inequality is the key.

\begin{lemma}[Hyperbolic scalar majorization]
\label{lem:scalar-major}
For arbitrary \(u,v\in\R\), let
\(\omega=\sqrt{u^2+\tau^2}\).
Then
\begin{equation}
\boxed{
\phi_\tau(u+v)
\le
\phi_\tau(u)+\phi_\tau'(u)v+\frac{v^2}{4\omega}.
}
\label{eq:scalar-major}
\end{equation}
\end{lemma}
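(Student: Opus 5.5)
The plan is to reduce \eqref{eq:scalar-major} to an AM--GM inequality. The linear part of \(\phi_\tau\) contributes nothing, so only the square-root term matters. By \eqref{eq:phi} and \eqref{eq:weights}, \(\phi_\tau(r)=\tfrac12 r+\tfrac12\sqrt{r^2+\tau^2}\) and \(\phi_\tau'(u)=\tfrac12+\tfrac{u}{2\omega}\). The terms \(\tfrac12(u+v)\) on the left and \(\tfrac12 u+\tfrac12 v\) on the right cancel exactly. Multiplying what remains by \(2\), the claim is equivalent to
\[
h:=\sqrt{(u+v)^2+\tau^2}\;\le\;\omega+\frac{u v}{\omega}+\frac{v^2}{2\omega}.
\]

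Next I multiply through by \(\omega=\sqrt{u^2+\tau^2}>0\), which is legitimate because \(\tau>0\). The inequality becomes
\[
h\,\omega\le \omega^2+uv+\tfrac12 v^2 .
\]
The key observation is the identity \(h^2=(u+v)^2+\tau^2=\omega^2+2uv+v^2\). It gives \(\omega^2+uv+\tfrac12v^2=\tfrac12(\omega^2+h^2)\). The inequality therefore reads \(h\omega\le\tfrac12(h^2+\omega^2)\), which is equivalent to \((h-\omega)^2\ge0\).

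Reversing these equivalent steps proves \eqref{eq:scalar-major} for all \(u,v\in\R\). Equality holds exactly when \(h=\omega\), i.e. \(|u+v|=|u|\). No step is a genuine obstacle. The only point needing care is spotting that the right-hand side equals the arithmetic mean of \(\omega^2\) and \(h^2\). A route through Taylor's theorem with \(\sup\phi_\tau''\) would give only the weaker constant \(1/(4\tau)\), so I would not take it.
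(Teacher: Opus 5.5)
Your proof is correct and is essentially the paper's argument: the paper applies the concavity (tangent-line) inequality for the square root at \(\omega^2\), namely \(h\le\omega+(h^2-\omega^2)/(2\omega)\), which after multiplying by \(2\omega\) is exactly your \(2h\omega\le h^2+\omega^2\), i.e.\ \((h-\omega)^2\ge0\). Your explicit cancellation of the linear terms and your equality characterization \(|u+v|=|u|\) are correct.
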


\begin{proof}
The square-root function is concave on \((0,\infty)\), hence
\[
\sqrt{(u+v)^2+\tau^2}
\le
\sqrt{u^2+\tau^2}
+
\frac{(u+v)^2-u^2}{2\omega}.
\]
Using
\((u+v)^2-u^2=2uv+v^2\)
and substituting in \eqref{eq:phi} yields \eqref{eq:scalar-major}.
\end{proof}

\begin{lemma}[Monotonicity and Lipschitz property]
\label{lem:phi-lip}
For every \(\tau>0\), \(\phi_\tau\) is strictly increasing and \(1\)-Lipschitz:
\[
|\phi_\tau(u)-\phi_\tau(v)|\le|u-v|.
\]
\end{lemma}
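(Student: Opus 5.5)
The plan is to use the closed-form derivative \(\phi_\tau'(r)=\tfrac12\bigl(1+r/\sqrt{r^2+\tau^2}\bigr)\), already recorded in \eqref{eq:weights} as \(a_i\), together with the mean value theorem. Both claims then reduce to a two-sided bound on \(\phi_\tau'\).

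First, since \(\tau>0\), we have \(\sqrt{r^2+\tau^2}>|r|\) for every \(r\in\R\). Hence
\[
-1<\frac{r}{\sqrt{r^2+\tau^2}}<1,
\qquad
0<\phi_\tau'(r)<1 .
\]
This is the same observation that gave \(0<a_i<1\) after \eqref{eq:weights}. The function \(\phi_\tau\) is \(C^1\) (indeed smooth), because \(r^2+\tau^2\ge\tau^2>0\), so the square root is differentiable everywhere.

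Second, strict monotonicity follows because \(\phi_\tau'>0\) everywhere. For \(u>v\), the mean value theorem gives
\[
\phi_\tau(u)-\phi_\tau(v)=\phi_\tau'(\xi)(u-v)>0
\]
for some \(\xi\) between \(u\) and \(v\).

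Third, for the Lipschitz bound we apply the same identity and use \(|\phi_\tau'(\xi)|<1\). This yields
\[
|\phi_\tau(u)-\phi_\tau(v)|=|\phi_\tau'(\xi)|\,|u-v|\le|u-v|.
\]
As a cross-check, the same conclusion follows algebraically. The map \(r\mapsto\sqrt{r^2+\tau^2}\) is \(1\)-Lipschitz, since it is the Euclidean norm of \((r,\tau)\). Therefore
\[
|\phi_\tau(u)-\phi_\tau(v)|\le\tfrac12|u-v|+\tfrac12|u-v|=|u-v|.
\]

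There is no real obstacle here. The only point needing care is that strictness of \(\phi_\tau'>0\) relies on \(\tau>0\), since for \(\tau=0\) the derivative vanishes on \(r<0\).
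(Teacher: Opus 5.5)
Your proof is correct and follows the paper's argument: the paper also uses \(0<\phi_\tau'(r)<1\) and concludes via the mean value theorem. Your algebraic cross-check, which uses that \(r\mapsto\sqrt{r^2+\tau^2}\) is \(1\)-Lipschitz, is valid but not needed.
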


\begin{proof}
Since \(0<\phi_\tau'(r)<1\), the conclusion follows from the mean-value theorem.
\end{proof}

\begin{theorem}[Global quadratic upper model]
\label{thm:global-major}
Let Assumption~\ref{ass:smooth} hold and
\(g_k=\nabla\Phi_\tau(z_k)\).
Then, for every \(d=(p,\eta)\in\R^{n+1}\),
\begin{equation}
\boxed{
\Phi_\tau(z_k+d)
\le
\Phi_\tau(z_k)+g_k^Td+\frac12d^TP_kd.
}
\label{eq:global-major}
\end{equation}
\end{theorem}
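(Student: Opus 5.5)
The plan is to bound each hyperbolic term $\phi_\tau(r_i(z_k+d))$ separately and then sum over $i$. For each component we will use three earlier results in turn: the descent lemma for $f_i$, the monotonicity and $1$-Lipschitz property of $\phi_\tau$ (Lemma~\ref{lem:phi-lip}), and the scalar majorization \eqref{eq:scalar-major} (Lemma~\ref{lem:scalar-major}). The $\delta I_{n+1}$ part of $P_k$ is not needed, since $\tfrac{\delta}{2}\norm{d}^2\ge0$ can simply be added at the end.

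Fix $d=(p,\eta)$ and write $u_i=r_{i,k}$, $\omega_{i,k}=\sqrt{u_i^2+\tau^2}$, and $\ell_i:=q_i(x_k)^Td=\nabla f_i(x_k)^Tp-\eta$. The exact residual change is
\[
v_i:=r_i(z_k+d)-u_i=f_i(x_k+p)-f_i(x_k)-\eta .
\]
By Assumption~\ref{ass:smooth} and the standard descent lemma,
\[
v_i\le \ell_i+\tfrac{L_i}{2}\norm{p}^2 .
\]

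The main obstacle is that applying \eqref{eq:scalar-major} directly with $v=v_i$ produces $v_i^2/(4\omega_{i,k})$. That quantity contains the nonlinear remainder of $f_i$ and does not match the quadratic form $\tfrac14\sum_i\ell_i^2/\omega_{i,k}$ coming from $\tfrac12 d^T(\tfrac12J_k^TD_k^{\rm hyp}J_k)d$. To avoid it, we first move from the exact residual to its linearization, and only then majorize. For each $i$ the chain is
\[
\phi_\tau(u_i+v_i)\le\phi_\tau\!\left(u_i+\ell_i+\tfrac{L_i}{2}\norm{p}^2\right)
\le\phi_\tau(u_i+\ell_i)+\tfrac{L_i}{2}\norm{p}^2
\le\phi_\tau(u_i)+a_{i,k}\ell_i+\frac{\ell_i^2}{4\omega_{i,k}}+\tfrac{L_i}{2}\norm{p}^2 .
\]
The first inequality uses that $\phi_\tau$ is increasing, the second that it is $1$-Lipschitz, and the third is \eqref{eq:scalar-major} with $u=u_i$, $v=\ell_i$. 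This step does not lose the $O(\tau^{-1})$ control, because the only $\tau$-dependent term carries the coefficient $1/\omega_{i,k}$, which is exactly the entry of $D_k^{\rm hyp}$.

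Summing over $i$ and adding $t_k+\eta$ gives
\[
\Phi_\tau(z_k+d)\le\Phi_\tau(z_k)+\eta+\sum_i a_{i,k}\ell_i+\tfrac14\sum_i\frac{\ell_i^2}{\omega_{i,k}}+\tfrac{\bar L}{2}\norm{p}^2 .
\]
It remains to identify the terms:
\begin{itemize}[leftmargin=8mm]
\item By \eqref{eq:gradient}, $g_k^Td=\sum_i a_{i,k}\nabla f_i(x_k)^Tp+\eta\bigl(1-\sum_i a_{i,k}\bigr)=\eta+\sum_i a_{i,k}\ell_i$.
\item $\tfrac12 d^T\bigl(\tfrac12J_k^TD_k^{\rm hyp}J_k\bigr)d=\tfrac14\sum_i\ell_i^2/\omega_{i,k}$.
\item $\tfrac12 d^T\bar L\Pi_xd=\tfrac{\bar L}{2}\norm{p}^2$.
\end{itemize}
Adding the nonnegative term $\tfrac{\delta}{2}\norm{d}^2$ then yields \eqref{eq:global-major}.
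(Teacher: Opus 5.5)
Your proposal is correct and follows the paper's proof essentially step for step. The chain of descent lemma, then monotonicity and $1$-Lipschitzness of $\phi_\tau$, then the scalar majorization applied at the linearized residual $u_i+\ell_i$, followed by summing, identifying the terms, and adding $\tfrac{\delta}{2}\norm{d}^2$, is exactly the paper's argument.
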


\begin{proof}
The descent lemma for each possibly nonconvex \(f_i\) gives
\[
f_i(x_k+p)
\le
f_i(x_k)+\nabla f_i(x_k)^Tp+\frac{L_i}{2}\norm{p}^2.
\]
Define
\[
u_{i,k}(d)=\nabla f_i(x_k)^Tp-\eta=q_i(x_k)^Td.
\]
Then
\[
f_i(x_k+p)-(t_k+\eta)
\le
r_{i,k}+u_{i,k}(d)+\frac{L_i}{2}\norm{p}^2.
\]
By Lemma~\ref{lem:phi-lip},
\[
\phi_\tau(f_i(x_k+p)-t_k-\eta)
\le
\phi_\tau(r_{i,k}+u_{i,k}(d))
+\frac{L_i}{2}\norm{p}^2.
\]
Applying Lemma~\ref{lem:scalar-major},
\[
\phi_\tau(r_{i,k}+u_{i,k}(d))
\le
\phi_\tau(r_{i,k})
+
a_{i,k}u_{i,k}(d)
+
\frac{u_{i,k}(d)^2}{4\omega_{i,k}}.
\]
Summing over \(i\) and adding \(t_k+\eta\), the linear terms form \(g_k^Td\), and
\[
\Phi_\tau(z_k+d)
\le
\Phi_\tau(z_k)
+
g_k^Td
+
\frac{\bar L}{2}\norm{p}^2
+
\frac14\sum_i\frac{(q_i(x_k)^Td)^2}{\omega_{i,k}}.
\]
The last two terms equal
\[
\frac12d^T\left(
\bar L\Pi_x+\frac12J_k^TD_k^{\rm hyp}J_k
\right)d.
\]
Adding the nonnegative term \(\delta\norm d^2/2\) proves \eqref{eq:global-major}.
\end{proof}

\begin{corollary}
\label{cor:ray-major}
For every direction \(d_k\) and every \(\alpha\ge0\),
\[
\Phi_\tau(z_k+\alpha d_k)
\le
\Phi_\tau(z_k)
+
\alpha g_k^Td_k
+
\frac{\alpha^2}{2}d_k^TP_kd_k.
\]
\end{corollary}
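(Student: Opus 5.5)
The corollary is a direct specialization of Theorem~\ref{thm:global-major}, so no new estimate is needed. I will apply inequality \eqref{eq:global-major} with the particular displacement \(d=\alpha d_k\).

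The steps are as follows.

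\textbf{Step 1.} Fix an arbitrary direction \(d_k\in\R^{n+1}\) and a scalar \(\alpha\ge0\), and set \(d:=\alpha d_k\), written as \(d=(p,\eta)\) with \(p=\alpha p_k\) and \(\eta=\alpha\eta_k\). Theorem~\ref{thm:global-major} holds for every \(d\in\R^{n+1}\), with no restriction on its size or sign. Its proof used only the descent lemma for each \(f_i\) under Assumption~\ref{ass:smooth}, Lemma~\ref{lem:phi-lip}, and Lemma~\ref{lem:scalar-major}. So the theorem applies verbatim to this \(d\).

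\textbf{Step 2.} Substitute into the two model terms. The linear term becomes \(g_k^T(\alpha d_k)=\alpha g_k^Td_k\), by linearity of the inner product. The quadratic term becomes \(\tfrac12(\alpha d_k)^TP_k(\alpha d_k)=\tfrac{\alpha^2}{2}d_k^TP_kd_k\), by bilinearity. Here \(P_k\) depends only on \(z_k\), \(\tau\), \(\delta\) and \(\bar L\), and not on \(d\). Hence it is the same matrix for every trial step along the ray, which is exactly what the line-search application requires.

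\textbf{Step 3.} The left-hand side is \(\Phi_\tau(z_k+\alpha d_k)\). This gives the stated inequality for all \(\alpha\ge0\); in fact the same argument gives it for all \(\alpha\in\R\).

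The only point needing care is that \(P_k\) is frozen at \(z_k\) and does not vary with the trial point. This is guaranteed by construction \eqref{eq:Pk}. Everything else is routine algebra.
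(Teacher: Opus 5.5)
Your proposal is correct and matches the paper, which states the corollary without a separate proof as an immediate consequence of Theorem~\ref{thm:global-major} with the substitution \(d=\alpha d_k\) (linearity of the first-order term, bilinearity of the quadratic term, and \(P_k\) frozen at \(z_k\)). Your observation that the bound actually holds for every real \(\alpha\) is also correct, since the theorem places no restriction on \(d\).
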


\begin{remark}[Role of \(P_k\)]
The algebraic three-term identities developed below only need \(P_k\) to be SPD.
The special hyperbolic form \eqref{eq:Pk} is essential for globalization: the same matrix that defines the search geometry also globally majorizes the smoothing objective.
This identity of the preconditioning metric and the majorization metric is what allows the direction-energy estimate to be converted directly into a \(\tau\)-uniform Armijo bound.
\end{remark}

\section{Variable-curvature three-term CG framework}
\label{sec:direction}

For \(k\ge1\), define the displacement
\[
s_{k-1}=z_k-z_{k-1}.
\]
The main framework fixes this displacement and allows a nonzero curvature-response vector
\[
b_k\in\R^{n+1},\qquad b_k\neq0
\]
to vary with the iteration.
The standard secant choice is \(b_k=y_{k-1}:=g_k-g_{k-1}\), but other choices will be given in Section~\ref{sec:instances}.

For a nonrestart iteration, abbreviate
\[
s=s_{k-1},\quad b=b_k,\quad P=P_k,\quad g=g_k,
\]
and define
\begin{equation}
C=s^TPs,
\qquad
B=b^TP^{-1}b,
\qquad
\Delta=s^Tb,
\qquad
\chi=\frac{\Delta}{\sqrt{CB}}.
\label{eq:C-B-chi}
\end{equation}
By metric Cauchy--Schwarz, \(\chi\in[-1,1]\).
Fix \(\vartheta>1\) and define
\begin{equation}
\boxed{
\cD
=
\Delta+\vartheta\sqrt{CB}
=
\sqrt{CB}(\vartheta+\chi).
}
\label{eq:denom}
\end{equation}

\begin{lemma}[Strict denominator positivity]
\label{lem:den-pos}
If \(s\neq0\) and \(b\neq0\), then
\[
\cD
\ge
(\vartheta-1)\sqrt{CB}>0.
\]
\end{lemma}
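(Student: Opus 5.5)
The plan is to reduce the claim to the metric Cauchy--Schwarz inequality $|s^Tb|\le\sqrt{(s^TPs)(b^TP^{-1}b)}$ and to the strict positivity of $C$ and $B$.

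\textbf{Step 1: $C$ and $B$ are strictly positive.} By Proposition~\ref{prop:spd}, $P=P_k\succeq\delta I_{n+1}\succ0$. Hence $P^{-1}$ is also SPD, with $P^{-1}\succ0$. Since $s\neq0$ and $b\neq0$, we get $C=s^TPs\ge\delta\|s\|^2>0$ and $B=b^TP^{-1}b>0$. In particular $\sqrt{CB}>0$, so $\chi$ in \eqref{eq:C-B-chi} is well defined.

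\textbf{Step 2: metric Cauchy--Schwarz.} Let $P^{1/2}$ be the SPD square root of $P$. Then $\Delta=s^Tb=(P^{1/2}s)^T(P^{-1/2}b)$. The Euclidean Cauchy--Schwarz inequality gives
\[
|\Delta|\le\|P^{1/2}s\|\,\|P^{-1/2}b\|=\sqrt{C}\sqrt{B}.
\]
Therefore $\chi\in[-1,1]$, and in particular $\Delta\ge-\sqrt{CB}$.

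\textbf{Step 3: conclusion.} Adding $\vartheta\sqrt{CB}$ to both sides of the last inequality gives
\[
\mathcal D=\Delta+\vartheta\sqrt{CB}\ge(\vartheta-1)\sqrt{CB}.
\]
Since $\vartheta>1$ and $\sqrt{CB}>0$ by Step~1, the right-hand side is strictly positive.

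There is no real obstacle here. The only point needing care is Step~1: one must use that $P$, and hence $P^{-1}$, is SPD, rather than merely positive semidefinite, so that both $C$ and $B$ are nonzero.
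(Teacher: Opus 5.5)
Your proof is correct and takes the same route as the paper: write $\Delta=(P^{1/2}s)^T(P^{-1/2}b)$, apply Cauchy--Schwarz to get $|\Delta|\le\sqrt{CB}$, and then read the bound off the definition $\cD=\Delta+\vartheta\sqrt{CB}$. Your Step~1 uses $P\succeq\delta I\succ0$ to show $C>0$ and $B>0$, which makes explicit the strict positivity that the paper leaves implicit.
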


\begin{proof}
Since
\[
|\Delta|
=
|(P^{1/2}s)^T(P^{-1/2}b)|
\le
\sqrt{CB},
\]
the result follows from \eqref{eq:denom}.
\end{proof}

For \(\mu_k\ge0\), define the enhanced direction
\begin{equation}
\boxed{
\begin{aligned}
d_k={}&-P_k^{-1}g_k\\
&+
\left[
\frac{g_k^TP_k^{-1}b_k}{\cD_k}
-
\mu_k
\frac{g_k^Ts_{k-1}}{C_k}
\right]s_{k-1}\\
&-
\frac{g_k^Ts_{k-1}}{\cD_k}P_k^{-1}b_k.
\end{aligned}}
\label{eq:direction}
\end{equation}
If \(k=0\), \(s_{k-1}=0\), or \(b_k=0\), we use the restart
\begin{equation}
d_k=-P_k^{-1}g_k.
\label{eq:restart}
\end{equation}

\begin{proposition}[Three-term NCG recurrence]
\label{prop:three-term}
Since \(s_{k-1}=\alpha_{k-1}d_{k-1}\), direction \eqref{eq:direction} can be written as
\begin{equation}
\boxed{
d_k
=
-P_k^{-1}g_k
+
\beta_kd_{k-1}
-
\gamma_kP_k^{-1}b_k,
}
\label{eq:three-term}
\end{equation}
where
\[
\beta_k
=
\alpha_{k-1}
\left[
\frac{g_k^TP_k^{-1}b_k}{\cD_k}
-
\mu_k\frac{g_k^Ts_{k-1}}{C_k}
\right],
\qquad
\gamma_k
=
\frac{g_k^Ts_{k-1}}{\cD_k}.
\]
Thus the main framework remains a preconditioned three-term nonlinear conjugate-gradient method.
\end{proposition}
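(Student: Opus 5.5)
The plan is to substitute the displacement relation \(s_{k-1}=\alpha_{k-1}d_{k-1}\) into the middle term of \eqref{eq:direction}. This relation holds by construction of the iteration \(z_k=z_{k-1}+\alpha_{k-1}d_{k-1}\), with \(\alpha_{k-1}>0\) the accepted step size. Write the scalar coefficient multiplying \(s_{k-1}\) in \eqref{eq:direction} as
\[
\theta_k=\frac{g_k^TP_k^{-1}b_k}{\cD_k}-\mu_k\frac{g_k^Ts_{k-1}}{C_k}.
\]
Then \(\theta_k s_{k-1}=\alpha_{k-1}\theta_k d_{k-1}=\beta_k d_{k-1}\) with \(\beta_k\) as stated. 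The third term of \eqref{eq:direction} is already of the form \(-\gamma_kP_k^{-1}b_k\) with \(\gamma_k=g_k^Ts_{k-1}/\cD_k\), and the first term is untouched. This yields \eqref{eq:three-term}.

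Before substituting, one checks that all quantities are well defined on a nonrestart iteration, where \(s_{k-1}\neq0\) and \(b_k\neq0\). By Proposition~\ref{prop:spd}, \(P_k\succ0\), so \(P_k^{-1}\) exists and \(C_k=s_{k-1}^TP_ks_{k-1}>0\). By Lemma~\ref{lem:den-pos}, \(\cD_k\ge(\vartheta-1)\sqrt{C_kB_k}>0\). Hence \(\beta_k\) and \(\gamma_k\) are finite real numbers.

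For the restart case \eqref{eq:restart}, the same form holds trivially with \(\beta_k=\gamma_k=0\). There is no substantive obstacle; the only point needing care is confirming that \(s_{k-1}=\alpha_{k-1}d_{k-1}\) is the update rule of the algorithm, which it is by definition of the displacement.
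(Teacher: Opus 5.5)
Your proposal is correct and matches the paper's one-line justification: substitute \(s_{k-1}=\alpha_{k-1}d_{k-1}\) into the \(s_{k-1}\)-term of \eqref{eq:direction}, and your well-definedness check via Proposition~\ref{prop:spd} and Lemma~\ref{lem:den-pos} is a useful addition. One cosmetic point: avoid calling the bracketed coefficient \(\theta_k\), since the paper already uses \(\theta_k\) for the conjugacy parameter in \eqref{eq:theta-k}.
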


\begin{theorem}[Enhanced sufficient-descent identity]
\label{thm:descent}
For every nonrestart step,
\begin{equation}
\boxed{
g_k^Td_k
=
-g_k^TP_k^{-1}g_k
-
\mu_k
\frac{(g_k^Ts_{k-1})^2}{C_k}.
}
\label{eq:descent-identity}
\end{equation}
Hence, for every \(\mu_k\ge0\),
\begin{equation}
\boxed{
-g_k^Td_k
\ge
g_k^TP_k^{-1}g_k>0.
}
\label{eq:descent}
\end{equation}
If \(\mu_k>0\) and \(g_k^Ts_{k-1}\neq0\), the inequality is strict.
\end{theorem}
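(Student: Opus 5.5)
The plan is to take the inner product of \(g_k\) with each of the three terms of \eqref{eq:direction}, add the results, and observe that the two \(\cD_k\)-terms cancel. Everything is well defined: on a nonrestart step \(s_{k-1}\neq0\) and \(b_k\neq0\), so \(C_k=s_{k-1}^TP_ks_{k-1}>0\) by Proposition~\ref{prop:spd}, and \(\cD_k>0\) by Lemma~\ref{lem:den-pos}.

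The three contributions are as follows.
\begin{itemize}
\item The first term gives \(g_k^T(-P_k^{-1}g_k)=-g_k^TP_k^{-1}g_k\).
\item The bracketed coefficient multiplies \(g_k^Ts_{k-1}\) and gives
\[
\frac{(g_k^TP_k^{-1}b_k)(g_k^Ts_{k-1})}{\cD_k}-\mu_k\frac{(g_k^Ts_{k-1})^2}{C_k}.
\]
\item The last term gives
\[
-\frac{(g_k^Ts_{k-1})(g_k^TP_k^{-1}b_k)}{\cD_k}.
\]
\end{itemize}
The mixed products in the second and third items are identical scalars with opposite signs, so they cancel. This cancellation is exactly why the coefficients were built with the common denominator \(\cD_k\). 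What remains is \eqref{eq:descent-identity}.

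For \eqref{eq:descent}, note that \(\mu_k\ge0\) and \(C_k>0\) make the term \(\mu_k (g_k^Ts_{k-1})^2/C_k\) nonnegative. Hence
\[
-g_k^Td_k\ge g_k^TP_k^{-1}g_k .
\]
Since \(P_k^{-1}\succ0\), the right-hand side is positive whenever \(g_k\neq0\). I read the strict inequality \(>0\) in \eqref{eq:descent} as holding for nonstationary iterates, since the algorithm stops once \(g_k=0\). If \(\mu_k>0\) and \(g_k^Ts_{k-1}\neq0\), the extra term is strictly positive, which gives the strict improvement.

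There is no real obstacle in this argument. The only point that needs care is confirming that \(C_k\) and \(\cD_k\) are nonzero, which Proposition~\ref{prop:spd} and Lemma~\ref{lem:den-pos} supply on nonrestart steps, and stating the \(g_k\neq0\) caveat for the final positivity.
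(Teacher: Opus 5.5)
Your proof is correct and follows the paper's argument: premultiply \eqref{eq:direction} by \(g_k^T\), cancel the two mixed \(\cD_k\)-terms, and read off \eqref{eq:descent-identity} and then \eqref{eq:descent}. You also check that \(C_k>0\) and \(\cD_k>0\) on nonrestart steps, and note that the final strict positivity requires \(g_k\neq0\), which holds because Algorithm~1 stops otherwise; both are sound refinements that the paper leaves implicit.
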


\begin{proof}
Premultiplying \eqref{eq:direction} by \(g_k^T\), the two mixed curvature terms cancel:
\[
\frac{(g_k^TP_k^{-1}b_k)(g_k^Ts_{k-1})}{\cD_k}
-
\frac{(g_k^Ts_{k-1})(g_k^TP_k^{-1}b_k)}{\cD_k}
=0.
\]
The remaining terms give \eqref{eq:descent-identity}.
\end{proof}

Define the regularized curvature response
\begin{equation}
\boxed{
\widetilde b_k
=
b_k
+
\vartheta\sqrt{\frac{B_k}{C_k}}\,P_ks_{k-1}.
}
\label{eq:btilde}
\end{equation}
Then
\begin{equation}
s_{k-1}^T\widetilde b_k
=
\cD_k>0.
\label{eq:secant-pos}
\end{equation}

\begin{theorem}[Regularized curvature-pair conjugacy]
\label{thm:conjugacy}
Direction \eqref{eq:direction} satisfies
\begin{equation}
\boxed{
d_k^T\widetilde b_k
=
-\theta_k\,g_k^Ts_{k-1},
}
\label{eq:conjugacy}
\end{equation}
where
\begin{equation}
\boxed{
\theta_k
=
\frac{
\widetilde b_k^TP_k^{-1}\widetilde b_k
}{
s_{k-1}^T\widetilde b_k
}
+
\mu_k
\frac{
s_{k-1}^T\widetilde b_k
}{
s_{k-1}^TP_ks_{k-1}
}
>0.
}
\label{eq:theta-k}
\end{equation}
\end{theorem}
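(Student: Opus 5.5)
The proof is a direct computation. I will expand \(d_k^T\widetilde b_k\) using the splitting \eqref{eq:btilde}, namely \(\widetilde b=b+\kappa Ps\) with \(\kappa:=\vartheta\sqrt{B/C}\) (abbreviations as in \eqref{eq:C-B-chi}). Then
\[
d^T\widetilde b=d^Tb+\kappa\, d^TPs.
\]
Before anything else I record the identity \(\cD=\Delta+\kappa C\), since \(\kappa C=\vartheta\sqrt{BC}\). This is exactly \eqref{eq:secant-pos}, and it drives all the cancellations below.

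\textbf{Step 1: the two inner products.} From \eqref{eq:direction},
\[
d^Tb=-g^TP^{-1}b+\frac{(g^TP^{-1}b)\Delta}{\cD}-\mu\frac{(g^Ts)\Delta}{C}-\frac{(g^Ts)B}{\cD},
\]
\[
d^TPs=-g^Ts+\frac{(g^TP^{-1}b)C}{\cD}-\mu\, g^Ts-\frac{(g^Ts)\Delta}{\cD}.
\]

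\textbf{Step 2: group by the two scalars \(g^TP^{-1}b\) and \(g^Ts\).}
\begin{itemize}
\item The coefficient of \(g^TP^{-1}b\) is \((-\cD+\Delta+\kappa C)/\cD=0\). So the conjugacy relation does not depend on the projection of \(g\) onto the curvature response, just as the descent identity in Theorem~\ref{thm:descent} does not.
\item The \(\mu\)-free part of the coefficient of \(-g^Ts\) is
\[
\kappa+\frac{\kappa\Delta+B}{\cD}=\frac{\kappa\cD+\kappa\Delta+B}{\cD}=\frac{B+2\kappa\Delta+\kappa^2C}{\cD}.
\]
Here I substitute \(\kappa\cD=\kappa\Delta+\kappa^2C\). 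Expanding \(\widetilde b^TP^{-1}\widetilde b=B+2\kappa\, s^Tb+\kappa^2 s^TPs\) shows that this numerator is exactly \(\widetilde b^TP^{-1}\widetilde b\), and the denominator is \(s^T\widetilde b\).
\item The \(\mu\)-part of that coefficient is \(\mu(\Delta/C+\kappa)=\mu(\Delta+\kappa C)/C=\mu\,\cD/C=\mu\, s^T\widetilde b/(s^TPs)\).
\end{itemize}
Together these give \eqref{eq:conjugacy} with \(\theta_k\) as in \eqref{eq:theta-k}.

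\textbf{Step 3: positivity of \(\theta_k\).} By Lemma~\ref{lem:den-pos}, \(s^T\widetilde b=\cD>0\). Also \(C=s^TPs>0\) by Proposition~\ref{prop:spd}, and \(\mu_k\ge0\), so the second term of \(\theta_k\) is nonnegative. Since \(s^T\widetilde b>0\), we have \(\widetilde b\ne0\), hence \(\widetilde b^TP^{-1}\widetilde b>0\) because \(P^{-1}\succ0\). Therefore \(\theta_k>0\).

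The only delicate point is the bookkeeping in Step 2: one must recognize that the \(\mu\)-free coefficient is the full metric energy of \(\widetilde b\), which requires using \(\kappa\cD=\kappa\Delta+\kappa^2C\) rather than cancelling term by term. The rest is routine.
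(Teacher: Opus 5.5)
Your proof is correct. The expansions of \(d^Tb\) and \(d^TPs\) check out, as do the vanishing coefficient of \(g^TP^{-1}b\), the identification \(B+2\kappa\Delta+\kappa^2C=\widetilde b^TP^{-1}\widetilde b\), and the \(\mu\)-term. Your Step~3 also proves \(\theta_k>0\), which the paper's proof leaves implicit.

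The organization differs from the paper's. The paper first rewrites the \(\mu_k=0\) direction entirely in terms of the regularized pair. It uses \(g^TP^{-1}\widetilde b=g^TP^{-1}b+\kappa\,g^Ts\) and \(P^{-1}\widetilde b=P^{-1}b+\kappa s\); the two \(\kappa\)-contributions to the coefficient of \(s\) cancel, giving
\[
d_k^{(0)}=-P^{-1}g+\frac{g^TP^{-1}\widetilde b}{s^T\widetilde b}\,s-\frac{g^Ts}{s^T\widetilde b}\,P^{-1}\widetilde b .
\]
Then \((d_k^{(0)})^T\widetilde b\) collapses in one line, because the first two terms cancel exactly. The quantity \(\widetilde b^TP^{-1}\widetilde b\) appears directly, so it never has to be recognized inside an expanded quadratic. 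The \(\mu_k\)-correction \(-\mu_k (g^Ts/C)\,s\) is handled separately.

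That route is shorter and explains why the relation holds. The \(\mu_k=0\) branch is exactly the standard three-term exact-cancellation form built from the pair \((s_{k-1},\widetilde b_k)\) with denominator \(s_{k-1}^T\widetilde b_k\). Its conjugacy with \(\widetilde b_k\) is then the usual structural identity. Your expansion of \(d^Tb+\kappa\,d^TPs\) needs no such reformulation and makes every cancellation explicit. The price is the bookkeeping step where you substitute \(\kappa\cD=\kappa\Delta+\kappa^2C\).
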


\begin{proof}
Let \(d_k^{(0)}\) denote \eqref{eq:direction} with \(\mu_k=0\).
Using \eqref{eq:btilde} and \eqref{eq:secant-pos}, direct cancellation gives
\[
d_k^{(0)}
=
-P_k^{-1}g_k
+
\frac{(P_k^{-1}g_k)^T\widetilde b_k}
{s_{k-1}^T\widetilde b_k}s_{k-1}
-
\frac{g_k^Ts_{k-1}}
{s_{k-1}^T\widetilde b_k}
P_k^{-1}\widetilde b_k.
\]
Therefore
\[
(d_k^{(0)})^T\widetilde b_k
=
-
\frac{
\widetilde b_k^TP_k^{-1}\widetilde b_k
}{
s_{k-1}^T\widetilde b_k
}
g_k^Ts_{k-1}.
\]
Since
\[
d_k
=
d_k^{(0)}
-
\mu_k
\frac{g_k^Ts_{k-1}}{C_k}s_{k-1},
\]
equations \eqref{eq:conjugacy}--\eqref{eq:theta-k} follow.
\end{proof}

\begin{corollary}[Modified Dai--Liao specialization]
\label{cor:DL}
If
\[
b_k=y_{k-1}=g_k-g_{k-1},
\]
then \eqref{eq:conjugacy} is a modified Dai--Liao generalized conjugacy relation with regularized secant vector
\[
\widetilde y_{k-1}
=
y_{k-1}
+
\vartheta
\sqrt{
\frac{y_{k-1}^TP_k^{-1}y_{k-1}}
{s_{k-1}^TP_ks_{k-1}}
}\,P_ks_{k-1}.
\]
If the preceding step is an exact line search, then
\(g_k^Ts_{k-1}=0\), and hence
\[
d_k^T\widetilde y_{k-1}=0.
\]
\end{corollary}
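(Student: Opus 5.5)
The corollary is a direct specialization of Theorem~\ref{thm:conjugacy}, so the plan is to substitute \(b_k=y_{k-1}\) and then read off the two claims. No new estimates are needed.

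First, set \(b_k=y_{k-1}\) in \eqref{eq:btilde}. Then \(B_k=y_{k-1}^TP_k^{-1}y_{k-1}\) and \(C_k=s_{k-1}^TP_ks_{k-1}\), so \(\widetilde b_k\) becomes exactly the displayed \(\widetilde y_{k-1}\). This requires \(y_{k-1}\neq0\) and \(s_{k-1}\neq0\), which is precisely the nonrestart condition under which \eqref{eq:direction} is used. If either vector vanishes, the restart \eqref{eq:restart} applies and the corollary is not invoked. With this substitution, \eqref{eq:conjugacy} reads
\[
d_k^T\widetilde y_{k-1}=-\theta_k\,g_k^Ts_{k-1},
\]
where \(\theta_k>0\) is given by \eqref{eq:theta-k}.

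Second, I would compare this with the Dai--Liao condition \(d_k^Ty_{k-1}=-t\,g_k^Ts_{k-1}\) with \(t>0\), as in \cite{DaiLiao2001}. It has the same form, with two changes: \(y_{k-1}\) is replaced by the regularized secant vector \(\widetilde y_{k-1}\), and the fixed parameter \(t\) is replaced by the iteration-dependent \(\theta_k>0\). It is worth noting why the regularized vector is the right one to use. By \eqref{eq:secant-pos}, \(s_{k-1}^T\widetilde y_{k-1}=\cD_k>0\), which is the curvature-positivity property that Dai--Liao-type schemes rely on. The classical analysis obtains this from a Wolfe line search; here Lemma~\ref{lem:den-pos} guarantees it with no line-search requirement. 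This justifies calling the relation a ``modified'' Dai--Liao relation.

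Third, for the exact line-search claim: if \(\alpha_{k-1}\) minimizes \(\Phi_\tau(z_{k-1}+\alpha d_{k-1})\) over \(\alpha\), the first-order condition gives \(g_k^Td_{k-1}=0\). Since \(s_{k-1}=\alpha_{k-1}d_{k-1}\), this yields \(g_k^Ts_{k-1}=0\). Substituting into the relation above gives \(d_k^T\widetilde y_{k-1}=0\), and this holds for every \(\mu_k\ge0\).

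The only delicate point is the well-definedness bookkeeping: \(\widetilde y_{k-1}\) and \(\theta_k\) must be finite, which holds because \(C_k>0\) (as \(P_k\succ0\) by Proposition~\ref{prop:spd}) and \(\cD_k>0\) (by Lemma~\ref{lem:den-pos}). Beyond that, the argument is pure substitution.
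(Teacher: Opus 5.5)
Your proposal is correct and matches the paper's intended argument: the corollary is a direct specialization of Theorem~\ref{thm:conjugacy}, obtained by substituting \(b_k=y_{k-1}\) so that \(\widetilde b_k=\widetilde y_{k-1}\), and then using \(g_k^Ts_{k-1}=0\) (from \(g_k^Td_{k-1}=0\) and \(s_{k-1}=\alpha_{k-1}d_{k-1}\)) under an exact line search. Your extra bookkeeping on well-definedness (\(s_{k-1},y_{k-1}\neq0\), \(C_k>0\), \(\cD_k>0\)) and your comparison with the Dai--Liao condition are consistent with the paper.
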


\begin{remark}
For a general \(b_k\), relation \eqref{eq:conjugacy} is best interpreted as a Dai--Liao-type curvature-pair conjugacy.
The terminology ``modified Dai--Liao'' is used without qualification for the standard secant response \(b_k=y_{k-1}\).
\end{remark}

\begin{remark}[Positive scaling invariance]
For any \(c_k>0\), replacing \(b_k\) by \(\widehat b_k=c_kb_k\) does not change the direction.
Indeed,
\[
\widehat B_k=c_k^2B_k,\quad
\widehat\Delta_k=c_k\Delta_k,\quad
\widehat\cD_k=c_k\cD_k,
\quad
\widehat\chi_k=\chi_k,
\]
and both curvature terms in \eqref{eq:direction} are unchanged.
Thus only the metric direction of the curvature response, not its arbitrary positive scale, is relevant.
\end{remark}

\begin{remark}[Recovery of the baseline HSPTCG direction]
With the standard secant response \(b_k=y_{k-1}\) and \(\mu_k=0\),
direction \eqref{eq:direction} reduces to the baseline regularized-secant
hyperbolic-smoothing preconditioned three-term CG direction.
The choice \(\mu_k=\mu_k^\star\) therefore defines an enhanced-descent branch
built on the same secant geometry rather than an unrelated search direction.
\end{remark}

\begin{proposition}[Relation with the Narushima exact-cancellation structure]
\label{prop:narushima-relation}
Suppose \(\mu_k=0\) and set \(p_k=P_k^{-1}b_k\).
If \(g_k^Tp_k\neq0\), then \eqref{eq:direction} can be written as
\[
d_k
=
-P_k^{-1}g_k
+
\widehat\beta_kd_{k-1}
-
\widehat\beta_k
\frac{g_k^Td_{k-1}}{g_k^Tp_k}\,p_k,
\]
where
\[
\widehat\beta_k
=
\alpha_{k-1}\frac{g_k^Tp_k}{\cD_k}.
\]
Hence the \(\mu_k=0\) branch is a particular \(P_k\)-metric analogue of the Narushima--Yabe--Ford exact-cancellation construction.
Conversely, if \(\mu_k>0\) and \(g_k^Ts_{k-1}\neq0\), then
\[
g_k^Td_k
<
-g_k^TP_k^{-1}g_k,
\]
so the direction is not in the metric exact-cancellation subclass.
\end{proposition}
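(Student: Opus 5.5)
The argument is direct substitution of the recurrence $s_{k-1}=\alpha_{k-1}d_{k-1}$ from Proposition~\ref{prop:three-term} into \eqref{eq:direction} with $\mu_k=0$, followed by an application of Theorem~\ref{thm:descent} for the converse.

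\emph{First claim.} Set $p_k=P_k^{-1}b_k$ and $\mu_k=0$. Then \eqref{eq:direction} reads
\[
d_k=-P_k^{-1}g_k+\frac{g_k^Tp_k}{\cD_k}s_{k-1}-\frac{g_k^Ts_{k-1}}{\cD_k}p_k .
\]
Replacing $s_{k-1}$ by $\alpha_{k-1}d_{k-1}$ turns the second term into $\widehat\beta_k d_{k-1}$ with $\widehat\beta_k=\alpha_{k-1}g_k^Tp_k/\cD_k$. Here $\cD_k>0$ by Lemma~\ref{lem:den-pos}. For the third term, write $g_k^Ts_{k-1}=\alpha_{k-1}g_k^Td_{k-1}$. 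Because $g_k^Tp_k\neq0$, we may multiply and divide by it:
\[
\frac{\alpha_{k-1}g_k^Td_{k-1}}{\cD_k}\,p_k
=\widehat\beta_k\,\frac{g_k^Td_{k-1}}{g_k^Tp_k}\,p_k .
\]
This gives exactly the displayed form. The hypothesis $g_k^Tp_k\neq0$ is needed only for this rescaling, and that is the one point to flag.

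\emph{Narushima--Yabe--Ford interpretation.} Premultiplying the displayed form by $g_k^T$, the last two terms become $\widehat\beta_k g_k^Td_{k-1}$ and $-\widehat\beta_k g_k^Td_{k-1}$, so they cancel for any value of $\widehat\beta_k$. Hence $g_k^Td_k=-g_k^TP_k^{-1}g_k$. This is the NYF template $d=-Hg+\beta d_{k-1}-\beta\,(g^Td_{k-1}/g^Tp)\,p$ with $H=P_k^{-1}$, i.e.\ the $P_k$-metric analogue asserted in the statement.

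\emph{Converse.} Take the identity \eqref{eq:descent-identity} from Theorem~\ref{thm:descent}. If $\mu_k>0$ and $g_k^Ts_{k-1}\neq0$, then $\mu_k(g_k^Ts_{k-1})^2/C_k>0$, since $C_k>0$ by Proposition~\ref{prop:spd} and $s_{k-1}\neq0$. Therefore $g_k^Td_k<-g_k^TP_k^{-1}g_k$. Every member of the metric exact-cancellation subclass satisfies the equality $g_k^Td_k=-g_k^TP_k^{-1}g_k$, as just shown, so $d_k$ cannot belong to it.

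There is no real obstacle. The only care needed is in the nondegeneracy conditions: $\alpha_{k-1}>0$ and $s_{k-1}\neq0$, which hold on nonrestart steps, together with $g_k^Tp_k\neq0$.
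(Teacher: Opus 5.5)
Your proposal is correct and follows the same route as the paper. You substitute \(s_{k-1}=\alpha_{k-1}d_{k-1}\) into the \(\mu_k=0\) formula, rescale the third term by \(g_k^Tp_k\), and then read off the strict inequality from the descent identity \eqref{eq:descent-identity} of Theorem~\ref{thm:descent}. Your explicit check that the two \(\widehat\beta_k\) terms cancel under \(g_k^T\), and your note that \(C_k>0\), only fill in details the paper leaves implicit.
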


\begin{proof}
The first statement follows from
\(s_{k-1}=\alpha_{k-1}d_{k-1}\)
and a direct rearrangement of the \(\mu_k=0\) formula.
The second follows from Theorem~\ref{thm:descent}.
\end{proof}

\begin{remark}
Proposition~\ref{prop:narushima-relation} does not assert an overall containment relation with the 2011 framework.
Narushima--Yabe--Ford allow arbitrary admissible choices of their scalar parameter and third vector, whereas the present exact-cancellation branch imposes the regularized metric denominator \(\cD_k\).
At the same time, the enhanced branch \(\mu_k>0\) leaves the exact-cancellation class.
\end{remark}

\section{Adaptive energy-budget enhancement}
\label{sec:mu}

We now choose \(\mu_k\) so that additional descent is obtained without worsening the baseline worst-case direction-energy constant.

Define
\begin{equation}
\zeta_\vartheta
=
1-\vartheta^{-2},
\qquad
\Gamma_0
=
\zeta_\vartheta^{-1}
=
\frac{\vartheta^2}{\vartheta^2-1}.
\label{eq:gamma0}
\end{equation}
For one iteration, set
\[
h=P^{-1/2}g,
\qquad
u=\frac{P^{1/2}s}{\sqrt C},
\qquad
v=\frac{P^{-1/2}b}{\sqrt B}.
\]
Then
\[
\norm u=\norm v=1,\qquad u^Tv=\chi.
\]
From \eqref{eq:direction},
\begin{equation}
P^{1/2}d
=
-M(\mu)h,
\qquad
M(\mu)
=
I+\mu uu^T
-
\frac{uv^T-vu^T}{\vartheta+\chi}.
\label{eq:Mmu}
\end{equation}
Define
\begin{equation}
\boxed{
\varpi^2
=
\frac{1-\chi^2}{(\vartheta+\chi)^2}.
}
\label{eq:varpi}
\end{equation}

\begin{lemma}[Worst-case skew energy]
\label{lem:varpi}
For \(\chi\in[-1,1]\),
\[
0\le\varpi^2
\le
\frac1{\vartheta^2-1}
=
\Gamma_0-1.
\]
The maximum is attained at \(\chi=-1/\vartheta\).
\end{lemma}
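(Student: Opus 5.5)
We treat \(\varpi^2\) as a one-variable function \(\psi(\chi)=(1-\chi^2)/(\vartheta+\chi)^2\) on \([-1,1]\) and maximize it by elementary calculus.

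Because \(\vartheta>1\), the denominator satisfies \(\vartheta+\chi\ge\vartheta-1>0\) on the whole interval. So \(\psi\) is smooth there, and \(\psi\ge0\) since \(1-\chi^2\ge0\). This gives the lower bound, with equality at \(\chi=\pm1\).

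For the upper bound, differentiate by the quotient rule:
\[
\psi'(\chi)=\frac{-2\chi(\vartheta+\chi)-2(1-\chi^2)}{(\vartheta+\chi)^3}
=\frac{-2(\vartheta\chi+1)}{(\vartheta+\chi)^3}.
\]
The denominator is positive on \([-1,1]\), so the sign of \(\psi'\) is the sign of \(-(\vartheta\chi+1)\). Hence \(\psi\) increases on \([-1,-1/\vartheta]\) and decreases on \([-1/\vartheta,1]\). The interior point \(-1/\vartheta\) lies in \((-1,0)\) because \(\vartheta>1\), so it is the unique global maximizer on the interval.

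Next evaluate \(\psi\) at this point:
\[
1-\vartheta^{-2}=\frac{\vartheta^2-1}{\vartheta^2},
\qquad
\vartheta-\vartheta^{-1}=\frac{\vartheta^2-1}{\vartheta},
\]
so
\[
\psi(-1/\vartheta)=\frac{(\vartheta^2-1)/\vartheta^2}{(\vartheta^2-1)^2/\vartheta^2}=\frac{1}{\vartheta^2-1}.
\]

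Finally, from \eqref{eq:gamma0},
\[
\Gamma_0-1=\frac{\vartheta^2}{\vartheta^2-1}-1=\frac{1}{\vartheta^2-1},
\]
which is the stated identity.

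As a check that avoids calculus, the bound \(\psi(\chi)\le 1/(\vartheta^2-1)\) is equivalent to
\[
(\vartheta^2-1)(1-\chi^2)\le(\vartheta+\chi)^2 ,
\]
which rearranges to \((\vartheta\chi+1)^2\ge0\). This also shows that equality holds exactly at \(\chi=-1/\vartheta\).

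The only point needing care is keeping \(\vartheta+\chi>0\) so that neither the sign analysis of \(\psi'\) nor the cross-multiplication reverses an inequality. This is guaranteed by \(\vartheta>1\) and \(\chi\ge-1\).
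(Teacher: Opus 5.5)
Your proof is correct and takes the same route as the paper: you compute \(\psi'(\chi)=-2(1+\vartheta\chi)/(\vartheta+\chi)^3\), use \(\vartheta+\chi>0\) to locate the unique maximizer \(\chi=-1/\vartheta\), and evaluate \(\psi\) there. The identity \((\vartheta+\chi)^2-(\vartheta^2-1)(1-\chi^2)=(\vartheta\chi+1)^2\), together with your explicit treatment of the lower bound, is a tidy optional addition that gives the bound and the equality case without calculus.
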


\begin{proof}
For
\[
\psi(\chi)=\frac{1-\chi^2}{(\vartheta+\chi)^2},
\]
we have
\[
\psi'(\chi)
=
-\frac{2(1+\vartheta\chi)}{(\vartheta+\chi)^3}.
\]
Because \(\vartheta+\chi>0\) on \([-1,1]\), the unique interior maximizer is
\(\chi=-1/\vartheta\), where
\(\psi=1/(\vartheta^2-1)\).
\end{proof}

\begin{theorem}[Maximal energy-preserving enhancement]
\label{thm:mu-star}
Let
\[
R_k=\Gamma_0-1-\varpi_k^2\ge0.
\]
The largest \(\mu\ge0\) satisfying
\begin{equation}
M(\mu)^TM(\mu)\preceq\Gamma_0I
\label{eq:budget-matrix}
\end{equation}
is
\begin{equation}
\boxed{
\mu_k^\star
=
\frac{\Gamma_0-1-\varpi_k^2}
{\sqrt{\Gamma_0}+1}.
}
\label{eq:mu-star}
\end{equation}
Moreover,
\[
0\le\mu_k^\star\le\sqrt{\Gamma_0}-1,
\]
and this upper bound is independent of \(\tau\).
\end{theorem}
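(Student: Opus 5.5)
The approach is to reduce the matrix inequality \eqref{eq:budget-matrix} to a \(2\times2\) problem on \(\operatorname{span}\{u,v\}\). On the orthogonal complement of that span, \(M(\mu)\) acts as the identity, and \(1\le\Gamma_0\), so nothing needs checking there. First I would fix an orthonormal basis. Set \(e_1=u\) and \(\sigma=\sqrt{1-\chi^2}\). If \(\sigma>0\), set \(e_2=(v-\chi u)/\sigma\). Then \(v=\chi e_1+\sigma e_2\), and a short computation gives
\[
uv^T-vu^T=\sigma\,(e_1e_2^T-e_2e_1^T).
\]
Write \(w=\sigma/(\vartheta+\chi)\ge0\), which is well defined because \(\vartheta+\chi>0\). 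Then \(w^2=\varpi_k^2\) by \eqref{eq:varpi}, and in the basis \((e_1,e_2)\)
\[
M(\mu)\big|_{\operatorname{span}}=\begin{pmatrix}1+\mu&-w\\ w&1\end{pmatrix},
\qquad
M^TM=\begin{pmatrix}(1+\mu)^2+w^2&-\mu w\\ -\mu w&1+w^2\end{pmatrix}.
\]
In the degenerate case \(\chi=\pm1\) we have \(v=\pm u\) and the skew term vanishes. This is the same as \(w=0\), where \(M\) has eigenvalue \(1+\mu\) along \(u\), so it can be handled within the same computation.

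Next I would write out \(\Gamma_0I-M^TM\succeq0\) on this block. With \(R_k=\Gamma_0-1-w^2\), the \((2,2)\) entry is \(R_k\). This is nonnegative by Lemma~\ref{lem:varpi}, which also gives \(R_k\le\Gamma_0-1\). The \((1,1)\) entry is \(R_k-\mu(2+\mu)\). The determinant condition reads
\[
\bigl(R_k-\mu(2+\mu)\bigr)R_k-\mu^2w^2\ge0
\iff
\mu^2(R_k+w^2)+2\mu R_k-R_k^2\le0 .
\]
Since \(R_k+w^2=\Gamma_0-1>0\), the left side is a convex quadratic in \(\mu\) and is \(\le0\) at \(\mu=0\). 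Its nonnegative feasible set is therefore the interval \([0,\mu^+]\), where \(\mu^+\) is the positive root:
\[
\mu^+=\frac{-R_k+R_k\sqrt{\Gamma_0}}{\Gamma_0-1}=\frac{R_k}{\sqrt{\Gamma_0}+1}.
\]

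One then has to confirm that the determinant condition together with \(R_k\ge0\) really characterizes positive semidefiniteness. If \(R_k>0\), a nonnegative determinant forces the \((1,1)\) entry to be nonnegative as well. If \(R_k=0\), the quadratic inequality forces \(\mu=0\), and then the block is \(\Gamma_0I-(1+w^2)I=0\succeq0\). The case \(w=0\) can be checked directly: the condition becomes \((1+\mu)^2\le\Gamma_0\), giving \(\mu\le\sqrt{\Gamma_0}-1=(\Gamma_0-1)/(\sqrt{\Gamma_0}+1)\), which agrees with the formula. This identifies the largest admissible \(\mu\) as \(\mu_k^\star\) in \eqref{eq:mu-star}.

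The bounds then follow from \(0\le R_k\le\Gamma_0-1\). They give \(0\le\mu_k^\star\le(\Gamma_0-1)/(\sqrt{\Gamma_0}+1)=\sqrt{\Gamma_0}-1\), and this bound depends only on \(\vartheta\), hence not on \(\tau\). The main obstacle is the reduction step: checking that \(M(\mu)\) is exactly the stated block, with the correct sign of the skew entry and the identity on the complement. It is also the step where the edge cases \(\chi=\pm1\) and \(R_k=0\) have to be handled separately.
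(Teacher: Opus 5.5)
Your proposal is correct and follows the paper's proof essentially step for step. You use the same reduction to the block \(\begin{pmatrix}1+\mu&-\varpi\\ \varpi&1\end{pmatrix}\) on \(\operatorname{span}\{u,v\}\), the same quadratic \((\Gamma_0-1)\mu^2+2R\mu-R^2\le0\) with positive root \(R/(\sqrt{\Gamma_0}+1)\), and the same bound from \(R\le\Gamma_0-1\); your extra checks (identity on the complement, the determinant-plus-diagonal criterion for semidefiniteness, and the cases \(R_k=0\) and \(\chi=\pm1\)) fill in details the paper leaves implicit.
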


\begin{proof}
If \(|\chi|<1\), choose an orthonormal basis of \(\operatorname{span}\{u,v\}\) whose first vector is \(u\).
In this basis, the nontrivial \(2\times2\) block of \(M(\mu)\) is
\[
\widehat M(\mu)
=
\begin{pmatrix}
1+\mu&-\varpi\\
\varpi&1
\end{pmatrix}.
\]
(The cases \(|\chi|=1\) follow directly or by continuity, with \(\varpi=0\).)
Thus
\[
\widehat M(\mu)^T\widehat M(\mu)
=
\begin{pmatrix}
(1+\mu)^2+\varpi^2&-\mu\varpi\\
-\mu\varpi&1+\varpi^2
\end{pmatrix}.
\]
Let \(R=\Gamma_0-1-\varpi^2\).
The condition
\(\Gamma_0I-\widehat M(\mu)^T\widehat M(\mu)\succeq0\)
is, for \(\mu\ge0\), equivalent to
\[
(\Gamma_0-1)\mu^2+2R\mu-R^2\le0.
\]
Its positive root is
\[
\mu
=
\frac{R(\sqrt{\Gamma_0}-1)}{\Gamma_0-1}
=
\frac{R}{\sqrt{\Gamma_0}+1},
\]
which proves \eqref{eq:mu-star}.
Finally \(R\le\Gamma_0-1\) gives
\[
\mu_k^\star
\le
\frac{\Gamma_0-1}{\sqrt{\Gamma_0}+1}
=
\sqrt{\Gamma_0}-1.
\]
\end{proof}

\begin{corollary}[Admissible partial enhancement]
\label{cor:partial-mu}
Every choice
\[
0\le\mu_k\le\mu_k^\star
\]
satisfies
\begin{equation}
\boxed{
d_k^TP_kd_k
\le
\Gamma_0\,g_k^TP_k^{-1}g_k.
}
\label{eq:energy}
\end{equation}
In particular one may take
\[
\mu_k=\xi_k\mu_k^\star,
\qquad
0\le\xi_k\le1.
\]
\end{corollary}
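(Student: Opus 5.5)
The plan is to turn the energy bound into a spectral statement about \(M(\mu)\) and then use the fact that the feasible set of \(\mu\) in \eqref{eq:budget-matrix} is an interval containing \([0,\mu_k^\star]\).

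By \eqref{eq:Mmu}, \(P_k^{1/2}d_k=-M(\mu_k)h\) with \(h=P_k^{-1/2}g_k\). Hence
\[
d_k^TP_kd_k=h^TM(\mu_k)^TM(\mu_k)h,
\qquad
g_k^TP_k^{-1}g_k=\norm h^2 .
\]
So \eqref{eq:energy} follows once we know \(M(\mu_k)^TM(\mu_k)\preceq\Gamma_0I\) for every \(\mu_k\in[0,\mu_k^\star]\).

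Theorem~\ref{thm:mu-star} only says that \(\mu_k^\star\) is the \emph{largest} admissible value. It does not by itself say that every smaller nonnegative value is admissible, so this needs a short argument.

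\textbf{Reduction to a \(2\times2\) block.} Work in the orthonormal basis from the proof of Theorem~\ref{thm:mu-star}. On \(\operatorname{span}\{u,v\}^\perp\), \(M(\mu)\) acts as the identity, and \(1\le\Gamma_0\). So it suffices to check the \(2\times2\) condition \(\Gamma_0I-\widehat M(\mu)^T\widehat M(\mu)\succeq0\).

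That proof showed this condition is equivalent, for \(\mu\ge0\), to
\[
q(\mu):=(\Gamma_0-1)\mu^2+2R\mu-R^2\le0 .
\]
The function \(q\) is a convex quadratic with \(q(0)=-R^2\le0\). Its roots have product \(-R^2/(\Gamma_0-1)\le0\). Therefore \(q\le0\) on the whole interval between its nonpositive root and \(\mu_k^\star\), and in particular on \([0,\mu_k^\star]\).

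Alternatively, the feasible set \(\{\mu:M(\mu)^TM(\mu)\preceq\Gamma_0I\}\) equals \(\{\mu:\norm{M(\mu)}_2\le\sqrt{\Gamma_0}\}\). This is the sublevel set of a convex function of \(\mu\), because \(M(\mu)\) is affine in \(\mu\). It is therefore an interval, it contains \(0\) by Lemma~\ref{lem:varpi}, and so it contains \([0,\mu_k^\star]\).

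\textbf{Degenerate case.} When \(|\chi|=1\) we have \(u=\pm v\). The skew term \(uv^T-vu^T\) then vanishes, so \(M(\mu)=I+\mu uu^T\), and the condition reduces to \((1+\mu)^2\le\Gamma_0\). This holds because \(\mu_k^\star\le\sqrt{\Gamma_0}-1\).

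\textbf{Main obstacle.} The only real point is the equivalence between the matrix inequality and \(q(\mu)\le0\) on all of \([0,\mu_k^\star]\), not just at the endpoint. I expect this to be the step needing care. I would verify it via the trace and determinant of \(\Gamma_0I-\widehat M^T\widehat M\): the diagonal entry \(\Gamma_0-1-\varpi^2=R\ge0\) and a nonnegative determinant together give positive semidefiniteness. Equivalently, I would rely on the convexity argument above, which sidesteps the algebra.

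Finally, setting \(\mu_k=\xi_k\mu_k^\star\) with \(\xi_k\in[0,1]\) is just a particular point of the interval \([0,\mu_k^\star]\), so the last assertion follows immediately.
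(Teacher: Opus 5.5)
Your argument is correct and is essentially the paper's proof. The paper likewise notes that the feasible set of the scalar inequality \((\Gamma_0-1)\mu^2+2R\mu-R^2\le0\) from the proof of Theorem~\ref{thm:mu-star} is exactly \([0,\mu_k^\star]\), and then reads off \(d_k^TP_kd_k=\norm{M(\mu_k)h}^2\le\Gamma_0\norm{h}^2\) from \eqref{eq:Mmu}. Your root-product and convexity arguments and your separate treatment of \(|\chi|=1\) make explicit details the paper leaves implicit; in the edge case \(R=0\) the determinant condition forces \(\mu=0\), so your test using one diagonal entry plus the determinant still holds.
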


\begin{proof}
The feasible set in the scalar quadratic inequality used in the proof of Theorem~\ref{thm:mu-star} is the interval \([0,\mu_k^\star]\).
Then \eqref{eq:Mmu} gives
\[
d_k^TP_kd_k
=
\norm{M(\mu_k)h}^2
\le
\Gamma_0\norm h^2
=
\Gamma_0\,g_k^TP_k^{-1}g_k.
\]
\end{proof}

\begin{theorem}[Uniform descent-to-energy estimate]
\label{thm:energy-descent}
For every \(0\le\mu_k\le\mu_k^\star\),
\begin{equation}
\boxed{
-g_k^Td_k
\ge
\zeta_\vartheta\,d_k^TP_kd_k,
\qquad
\zeta_\vartheta=1-\vartheta^{-2}.
}
\label{eq:eta}
\end{equation}
The constant is identical to the baseline \(\mu_k=0\) worst-case constant and is independent of \(b_k\), \(k\), and \(\tau\).
\end{theorem}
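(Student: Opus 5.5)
The plan is to combine the two estimates already available: the enhanced sufficient-descent inequality \eqref{eq:descent} of Theorem~\ref{thm:descent} and the energy budget \eqref{eq:energy} of Corollary~\ref{cor:partial-mu}. For a nonrestart step with \(0\le\mu_k\le\mu_k^\star\), Theorem~\ref{thm:descent} gives
\[
-g_k^Td_k\ge g_k^TP_k^{-1}g_k,
\]
and Corollary~\ref{cor:partial-mu} gives
\[
g_k^TP_k^{-1}g_k\ge\Gamma_0^{-1}\,d_k^TP_kd_k=\zeta_\vartheta\,d_k^TP_kd_k .
\]
Chaining the two yields \eqref{eq:eta}. No further estimate should be needed; the content lies entirely in the budget constraint \eqref{eq:budget-matrix}, which was designed so that \(\Gamma_0=\zeta_\vartheta^{-1}\).

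Restart steps \eqref{eq:restart} are handled separately. There \(d_k=-P_k^{-1}g_k\), so
\[
-g_k^Td_k=g_k^TP_k^{-1}g_k=d_k^TP_kd_k\ge\zeta_\vartheta\,d_k^TP_kd_k,
\]
since \(\zeta_\vartheta<1\). I would also note that the restart formula is the \(\mu\)-independent analogue of \(M=I\), so it trivially satisfies the budget.

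For the claim that the constant coincides with the baseline worst case, I would observe that at \(\mu=0\) the \(2\times2\) block computation in the proof of Theorem~\ref{thm:mu-star} gives
\[
\widehat M(0)^T\widehat M(0)=(1+\varpi^2)I .
\]
Hence \(\|M(0)h\|^2\le(1+\varpi_k^2)\|h\|^2\) exactly. By Lemma~\ref{lem:varpi}, \(\sup_\chi(1+\varpi^2)=\Gamma_0\), attained at \(\chi=-1/\vartheta\). So the best uniform baseline constant is \(\Gamma_0^{-1}=\zeta_\vartheta\), which the enhanced branch preserves.

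Independence of \(b_k\), \(k\), and \(\tau\) follows because \(\zeta_\vartheta\) depends only on \(\vartheta\). Scale invariance under positive scaling of \(b_k\), and arbitrariness of \(\chi\in[-1,1]\), are already absorbed in the worst case.

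The only delicate point is the degenerate case \(|\chi|=1\), where the basis construction in Theorem~\ref{thm:mu-star} breaks down. I would handle it directly: then \(v=\pm u\), the skew term vanishes, and \(M(\mu)=I+\mu uu^T\) with \(\varpi=0\) and \(\mu^\star\le\sqrt{\Gamma_0}-1\). Thus \(\|M(\mu)\|^2\le(1+\mu)^2\le\Gamma_0\), and the inequality still holds.
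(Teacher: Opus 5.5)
Your argument is correct and is the paper's proof: you chain $-g_k^Td_k\ge g_k^TP_k^{-1}g_k$ from Theorem~\ref{thm:descent} with $d_k^TP_kd_k\le\Gamma_0\,g_k^TP_k^{-1}g_k$ from Corollary~\ref{cor:partial-mu}, and use $\Gamma_0^{-1}=\zeta_\vartheta$. Your extra points are all correct details the paper leaves implicit: the restart case, the direct treatment of $|\chi_k|=1$, and the baseline-tightness check via $\widehat M(0)^T\widehat M(0)=(1+\varpi^2)I$ with the worst case at $\chi=-1/\vartheta$.
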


\begin{proof}
Theorem~\ref{thm:descent} gives
\[
-g_k^Td_k
\ge
g_k^TP_k^{-1}g_k.
\]
Corollary~\ref{cor:partial-mu} gives
\[
d_k^TP_kd_k
\le
\Gamma_0 g_k^TP_k^{-1}g_k.
\]
Since \(\Gamma_0^{-1}=\zeta_\vartheta\), the result follows.
\end{proof}

\begin{remark}[Interpretation of \(\mu_k^\star\)]
The quantity \(\mu_k^\star\) uses the local metric-energy margin left by the regularized curvature geometry.
At the worst metric angle \(\chi_k=-1/\vartheta\), one has
\(\varpi_k^2=\Gamma_0-1\) and \(\mu_k^\star=0\), so the method automatically returns to the exact-cancellation branch.
For better-aligned curvature responses, the skew part consumes less of the budget and a larger symmetric descent enhancement is permitted.
\end{remark}

\begin{remark}[Iteration-dependent \(\vartheta_k\)]
The constant \(\vartheta\) may be replaced by \(\vartheta_k\) provided
\[
\vartheta_k\ge\underline\vartheta>1.
\]
Using the local
\[
\Gamma_k=\frac{\vartheta_k^2}{\vartheta_k^2-1}
\]
in \eqref{eq:mu-star} yields
\[
d_k^TP_kd_k
\le
\Gamma_k g_k^TP_k^{-1}g_k
\le
\frac{\underline\vartheta^2}
{\underline\vartheta^2-1}
g_k^TP_k^{-1}g_k.
\]
Hence the uniform estimate becomes
\[
-g_k^Td_k
\ge
(1-\underline\vartheta^{-2})d_k^TP_kd_k.
\]
For clarity, the main algorithm uses fixed \(\vartheta\).
\end{remark}

\section{Algorithm and a smoothing-parameter-uniform Armijo bound}
\label{sec:algorithm}

We use the ordinary Armijo condition
\begin{equation}
\Phi_\tau(z_k+\alpha d_k)
\le
\Phi_\tau(z_k)+c_1\alpha g_k^Td_k,
\qquad
c_1\in(0,1).
\label{eq:armijo}
\end{equation}
The backtracking factor is \(\rho\in(0,1)\).

\begin{theorem}[\(\tau\)-uniform acceptable step interval]
\label{thm:armijo}
For any direction generated by \eqref{eq:direction} with
\(0\le\mu_k\le\mu_k^\star\), every
\begin{equation}
0<\alpha
\le
2(1-c_1)\zeta_\vartheta
\label{eq:acceptable-alpha}
\end{equation}
satisfies the Armijo condition \eqref{eq:armijo}.
\end{theorem}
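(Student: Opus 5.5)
The plan is to combine the ray majorization of Corollary~\ref{cor:ray-major} with the descent-to-energy estimate of Theorem~\ref{thm:energy-descent}. Corollary~\ref{cor:ray-major} gives, for every \(\alpha\ge0\),
\[
\Phi_\tau(z_k+\alpha d_k)
\le
\Phi_\tau(z_k)+\alpha g_k^Td_k+\frac{\alpha^2}{2}d_k^TP_kd_k .
\]
Hence the Armijo condition \eqref{eq:armijo} holds as soon as
\[
\alpha g_k^Td_k+\frac{\alpha^2}{2}d_k^TP_kd_k\le c_1\alpha g_k^Td_k .
\]
For \(\alpha>0\), this is equivalent to
\[
\frac{\alpha}{2}d_k^TP_kd_k\le(1-c_1)(-g_k^Td_k).
\]

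To verify it, I would invoke Theorem~\ref{thm:energy-descent}, which is valid because \(0\le\mu_k\le\mu_k^\star\). It gives \(d_k^TP_kd_k\le\zeta_\vartheta^{-1}(-g_k^Td_k)\). The left side is therefore at most \(\frac{\alpha}{2\zeta_\vartheta}(-g_k^Td_k)\). This is bounded by \((1-c_1)(-g_k^Td_k)\) exactly when \(\alpha\le 2(1-c_1)\zeta_\vartheta\), since \(-g_k^Td_k>0\) by \eqref{eq:descent}; this is the interval \eqref{eq:acceptable-alpha}.

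The restart case \(d_k=-P_k^{-1}g_k\) is handled separately. There \(-g_k^Td_k=d_k^TP_kd_k=g_k^TP_k^{-1}g_k\), so the same inequality holds with constant \(1\ge\zeta_\vartheta\). This case is therefore covered by the same interval. It also coincides with the \(\mu_k=0\), \(\cD\)-free formula.

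I expect no real obstacle. The only point to stress is that both ingredients are \(\tau\)-independent. The global majorizer of Theorem~\ref{thm:global-major} holds for every trial step, not merely for small ones, so no Lipschitz constant of \(\nabla\Phi_\tau\) (which would scale like \(\tau^{-1}\)) enters. In addition, \(\zeta_\vartheta\) depends only on \(\vartheta\). A short consequence to record is that backtracking from \(\alpha=1\) accepts a step with \(\alpha_k\ge\min\{1,2\rho(1-c_1)\zeta_\vartheta\}\), uniformly in \(k\) and \(\tau\).
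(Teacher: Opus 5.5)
Your proof is correct and follows the paper's argument: Corollary~\ref{cor:ray-major} combined with \(-g_k^Td_k\ge\zeta_\vartheta\,d_k^TP_kd_k\) from Theorem~\ref{thm:energy-descent}, with \(g_k^Td_k<0\) reducing the Armijo condition to \(\alpha\le 2(1-c_1)\zeta_\vartheta\). Your separate treatment of the restart direction (constant \(1\ge\zeta_\vartheta\)) is a useful addition that the paper's statement leaves implicit, and your backtracking bound \(\min\{1,2\rho(1-c_1)\zeta_\vartheta\}\) is valid and at least as sharp as the paper's \(\rho\min\{1,2(1-c_1)\zeta_\vartheta\}\).
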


\begin{proof}
Corollary~\ref{cor:ray-major} and Theorem~\ref{thm:energy-descent} give
\[
\begin{aligned}
\Phi_\tau(z_k+\alpha d_k)
&\le
\Phi_\tau(z_k)+\alpha g_k^Td_k+\frac{\alpha^2}{2}d_k^TP_kd_k\\
&\le
\Phi_\tau(z_k)
+
\alpha g_k^Td_k
\left(
1-\frac{\alpha}{2\zeta_\vartheta}
\right).
\end{aligned}
\]
Since \(g_k^Td_k<0\), Armijo is guaranteed if
\(1-\alpha/(2\zeta_\vartheta)\ge c_1\), which is exactly \eqref{eq:acceptable-alpha}.
\end{proof}

\begin{corollary}[\(\tau\)-uniform backtracking lower bound]
\label{cor:alpha-lower}
If backtracking starts at \(\alpha=1\) and repeatedly multiplies by
\(\rho\in(0,1)\), then
\begin{equation}
\boxed{
\alpha_k
\ge
\underline\alpha
:=
\rho
\min\left\{
1,\,
2(1-c_1)(1-\vartheta^{-2})
\right\}
>0.
}
\label{eq:alpha-lower}
\end{equation}
The lower bound is independent of the smoothing parameter \(\tau\).
\end{corollary}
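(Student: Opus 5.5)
The plan is to use Theorem~\ref{thm:armijo} together with the mechanics of backtracking. Set \(\bar\alpha:=2(1-c_1)\zeta_\vartheta\) with \(\zeta_\vartheta=1-\vartheta^{-2}\). Theorem~\ref{thm:armijo} says that every \(\alpha\in(0,\bar\alpha]\) satisfies \eqref{eq:armijo}. The relevant hypotheses are that \(d_k\) comes from \eqref{eq:direction} with \(0\le\mu_k\le\mu_k^\star\) (for instance \(\mu_k=\mu_k^\star\)), and that restart steps are also covered. A restart step \(d_k=-P_k^{-1}g_k\) is the case \(s=0\), \(\mu_k=0\). It satisfies \(d_k^TP_kd_k=g_k^TP_k^{-1}g_k=-g_k^Td_k\), so \eqref{eq:eta} holds with constant \(1\ge\zeta_\vartheta\). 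Hence the argument of Theorem~\ref{thm:armijo}, via Corollary~\ref{cor:ray-major}, applies verbatim to restarts as well. I would record this explicitly, since the corollary's statement is meant to cover every iteration.

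The backtracking trial steps are \(1,\rho,\rho^2,\dots\), and the accepted \(\alpha_k=\rho^{j_k}\) is the first trial satisfying \eqref{eq:armijo}. I would split into two cases.

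\emph{Case \(j_k=0\).} Then \(\alpha_k=1\ge\rho\min\{1,\bar\alpha\}\).

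\emph{Case \(j_k\ge1\).} The previous trial \(\rho^{j_k-1}=\alpha_k/\rho\) was rejected. By Theorem~\ref{thm:armijo}, every rejected step must exceed \(\bar\alpha\), so \(\alpha_k/\rho>\bar\alpha\), i.e.\ \(\alpha_k>\rho\bar\alpha\ge\underline\alpha\).

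In both cases \(\alpha_k\ge\rho\min\{1,\bar\alpha\}=\underline\alpha\). Positivity is immediate because \(c_1<1\), \(\vartheta>1\), and \(\rho>0\). The same contrapositive argument shows the loop terminates: any trial \(\rho^j\le\bar\alpha\) is accepted, so \(j_k\) is finite.

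For \(\tau\)-independence, note that \(\underline\alpha\) is built only from \(\rho\), \(c_1\) and \(\vartheta\). What needs checking is that the inputs to Theorem~\ref{thm:armijo} carry no hidden \(\tau\)-dependence. Corollary~\ref{cor:ray-major} holds for every \(\tau>0\) because \(P_k\) majorizes globally with \(\tau\) absorbed into \(D_k^{\rm hyp}\). The constant \(\zeta_\vartheta\) in Theorem~\ref{thm:energy-descent} is \(\tau\)-free. There is no real obstacle here; the only delicate point is remembering to cover the restart branch and the termination of backtracking, both of which I handle as above.
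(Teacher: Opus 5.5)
Your proposal is correct and follows the route the paper intends: the corollary is an immediate consequence of Theorem~\ref{thm:armijo}, since either the unit step is accepted or the rejected trial $\alpha_k/\rho$ must exceed $2(1-c_1)\zeta_\vartheta$, and $\tau$-independence is inherited from the constants $\rho$, $c_1$, $\vartheta$. Your explicit check that restart steps satisfy $-g_k^Td_k=d_k^TP_kd_k\ge\zeta_\vartheta\,d_k^TP_kd_k$, together with your termination remark, supplies details the paper leaves implicit.
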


\subsection*{Fixed-\(\tau\) algorithm}

\noindent
\fbox{\begin{minipage}{0.95\linewidth}
\textbf{Algorithm 1: Adaptive-curvature hyperbolic-majorization preconditioned three-term CG (fixed \(\tau\)).}

\begin{enumerate}[leftmargin=7mm,itemsep=1mm,topsep=1mm]
\item
Choose \(z_0\), \(\tau>0\), \(\delta>0\), \(\vartheta>1\),
\(c_1,\rho\in(0,1)\), tolerance \(\eps>0\), and a curvature-response rule.
\item
For \(k=0,1,\ldots\), compute \(g_k=\nabla\Phi_\tau(z_k)\).
Stop if \(\norm{g_k}\le\eps\).
\item
Construct \(J_k,D_k^{\rm hyp}\), and \(P_k\) by \eqref{eq:Pk}.
\item
If \(k=0\), set \(d_k=-P_k^{-1}g_k\).
Otherwise set \(s_{k-1}=z_k-z_{k-1}\), generate a nonzero curvature response \(b_k\), and compute \(C_k,B_k,\Delta_k,\chi_k,\cD_k\).
If \(s_{k-1}=0\) or \(b_k=0\), restart with \(d_k=-P_k^{-1}g_k\).
\item
Compute
\[
\varpi_k^2=
\frac{1-\chi_k^2}{(\vartheta+\chi_k)^2},
\qquad
\Gamma_0=\frac{\vartheta^2}{\vartheta^2-1},
\qquad
\mu_k^\star
=
\frac{\Gamma_0-1-\varpi_k^2}{\sqrt{\Gamma_0}+1}.
\]
Choose any \(\mu_k\in[0,\mu_k^\star]\); the default enhanced version uses \(\mu_k=\mu_k^\star\).
\item
Compute \(d_k\) from \eqref{eq:direction}.
Starting from \(\alpha=1\), backtrack \(\alpha\leftarrow\rho\alpha\) until \eqref{eq:armijo} holds.
Set \(z_{k+1}=z_k+\alpha_kd_k\).
\end{enumerate}
\end{minipage}}

\section{Fixed-smoothing global convergence and complexity}
\label{sec:convergence}

\begin{assumption}[Compact initial sublevel set]
\label{ass:compact}
For fixed \(\tau>0\),
\[
\cL_\tau(z_0)
=
\{z:\Phi_\tau(z)\le\Phi_\tau(z_0)\}
\]
is compact.
\end{assumption}

\begin{lemma}[Fixed-\(\tau\) spectral upper bound]
\label{lem:spectral}
Under Assumptions~\ref{ass:smooth} and \ref{ass:compact}, there exists
\(M_\tau<\infty\) such that
\[
P_k\preceq M_\tau I
\]
for every accepted iterate.
One possible bound is
\begin{equation}
M_\tau
=
\delta+\bar L
+
\frac1{2\tau}
\sum_{i=1}^m(G_i^2+1),
\label{eq:Mtau}
\end{equation}
where \(G_i\) bounds \(\norm{\nabla f_i(x)}\) on the \(x\)-projection of \(\cL_\tau(z_0)\).
\end{lemma}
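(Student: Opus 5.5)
The plan has three steps: keep the iterates inside the sublevel set, bound each of the three summands of \eqref{eq:Pk} in the Loewner order, and add the bounds.

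\emph{Step 1 (iterates stay in the sublevel set).} Every accepted step satisfies the Armijo condition \eqref{eq:armijo}. By Theorem~\ref{thm:descent}, \(g_k^Td_k<0\) whenever \(g_k\neq0\), so \(\Phi_\tau(z_{k+1})\le\Phi_\tau(z_k)\). The same holds on restart steps, since \(-g_k^TP_k^{-1}g_k<0\). By induction every accepted iterate lies in \(\cL_\tau(z_0)\), which is compact by Assumption~\ref{ass:compact}. Its \(x\)-projection is therefore compact, and continuity of each \(\nabla f_i\) (Assumption~\ref{ass:smooth}) gives finite constants \(G_i\) with \(\norm{\nabla f_i(x_k)}\le G_i\) for all \(k\).

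\emph{Step 2 (bounding the three summands).} First, \(\delta I_{n+1}\preceq\delta I\). Second, \(\bar L\Pi_x\preceq\bar L I\) because \(\Pi_x\) is an orthogonal projector. Third, write
\[
\tfrac12 J_k^TD_k^{\rm hyp}J_k=\tfrac12\sum_i\omega_{i,k}^{-1}q_i(x_k)q_i(x_k)^T.
\]
Since \(\omega_{i,k}=\sqrt{r_{i,k}^2+\tau^2}\ge\tau\), each weight satisfies \(\omega_{i,k}^{-1}\le\tau^{-1}\). Each rank-one matrix obeys \(q_iq_i^T\preceq\norm{q_i}^2I\), and
\[
\norm{q_i(x_k)}^2=\norm{\nabla f_i(x_k)}^2+1\le G_i^2+1.
\]
Summing the three bounds gives \(P_k\preceq M_\tau I\) with \(M_\tau\) exactly as in \eqref{eq:Mtau}.

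\emph{Main obstacle.} The only genuinely non-algebraic point is Step 1. The uniform gradient bounds \(G_i\) must come from compactness of the iterate set, not from any global bound on \(\nabla f_i\). This requires confirming that backtracking always terminates, so that \(z_{k+1}\) is well defined with a decreased value. Termination follows from Theorem~\ref{thm:armijo}: once the trial step falls below \(2(1-c_1)\zeta_\vartheta\), the Armijo condition holds.

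Finally, \(M_\tau\) blows up like \(\tau^{-1}\). This is harmless for the statement because \(\tau\) is fixed, and it explains the subscript in \(M_\tau\).
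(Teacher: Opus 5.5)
Your proposal is correct and follows the paper's proof: monotone Armijo decrease keeps the iterates in the compact sublevel set, which gives the bounds \(G_i\), and then \(\omega_{i,k}\ge\tau\) controls the hyperbolic term. Your termwise bound \(\sum_i\omega_{i,k}^{-1}q_iq_i^T\preceq\tau^{-1}\sum_i\norm{q_i}^2 I\) is the paper's estimate \(\lambda_{\max}(P_k)\le\delta+\bar L+\frac1{2\tau}\norm{J_k}_F^2\) written out summand by summand.
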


\begin{proof}
Armijo and Theorem~\ref{thm:descent} imply monotonic decrease, so all accepted iterates remain in \(\cL_\tau(z_0)\).
The \(x\)-projection of this compact set is compact, and continuity of \(\nabla f_i\) yields finite \(G_i\).
Since \(\omega_{i,k}\ge\tau\),
\[
D_k^{\rm hyp}\preceq\tau^{-1}I.
\]
Therefore
\[
\lambda_{\max}(P_k)
\le
\delta+\bar L
+
\frac1{2\tau}\norm{J_k}^2
\le
\delta+\bar L
+
\frac1{2\tau}\norm{J_k}_F^2,
\]
and
\[
\norm{J_k}_F^2
=
\sum_i(\norm{\nabla f_i(x_k)}^2+1)
\le
\sum_i(G_i^2+1).
\]
\end{proof}

\begin{theorem}[Global first-order convergence for fixed \(\tau\)]
\label{thm:global-conv}
Under Assumptions~\ref{ass:smooth} and \ref{ass:compact}, if Algorithm~1 generates an infinite sequence and at every nonrestart iteration
\[
b_k\neq0,
\qquad
0\le\mu_k\le\mu_k^\star,
\]
then
\begin{equation}
\boxed{
\norm{\nabla\Phi_\tau(z_k)}
\longrightarrow0.
}
\label{eq:grad-zero}
\end{equation}
\end{theorem}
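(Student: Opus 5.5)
The plan is a standard Zoutendijk-type summation argument, using the sufficient decrease from Armijo together with the $\tau$-uniform step lower bound. First, I would combine the Armijo condition \eqref{eq:armijo} with the descent inequality \eqref{eq:descent} of Theorem~\ref{thm:descent}. This gives, at every iteration,
\[
\Phi_\tau(z_k)-\Phi_\tau(z_{k+1})
\ge -c_1\alpha_k g_k^Td_k
\ge c_1\underline\alpha\, g_k^TP_k^{-1}g_k,
\]
where $\alpha_k\ge\underline\alpha$ comes from Corollary~\ref{cor:alpha-lower}. The restart direction $d_k=-P_k^{-1}g_k$ needs a separate check. For it, $-g_k^Td_k=g_k^TP_k^{-1}g_k$ and $d_k^TP_kd_k=g_k^TP_k^{-1}g_k$, so the energy estimate holds with constant $1\ge\zeta_\vartheta$. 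Hence Theorem~\ref{thm:armijo} and the lower bound $\underline\alpha$ apply to restart steps as well.

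Second, I would bound the metric gradient norm from below by the Euclidean one. By Lemma~\ref{lem:spectral}, all accepted iterates lie in $\cL_\tau(z_0)$, which uses the monotone decrease already established, and they satisfy $P_k\preceq M_\tau I$. Hence $P_k^{-1}\succeq M_\tau^{-1}I$, and so
\[
\Phi_\tau(z_k)-\Phi_\tau(z_{k+1})\ge \frac{c_1\underline\alpha}{M_\tau}\norm{g_k}^2 .
\]

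Third, I would telescope. The sequence $\Phi_\tau(z_k)$ is nonincreasing, and it is bounded below because $\Phi_\tau$ is continuous on the compact set $\cL_\tau(z_0)$. Alternatively, Lemma~\ref{lem:approx} gives $\Phi_\tau\ge F\ge f$, and $f$ is bounded below on the compact $x$-projection. Summing over $k$ then gives
\[
\sum_{k=0}^\infty\norm{g_k}^2\le \frac{M_\tau}{c_1\underline\alpha}\bigl(\Phi_\tau(z_0)-\inf_{\cL_\tau(z_0)}\Phi_\tau\bigr)<\infty,
\]
so $\norm{\nabla\Phi_\tau(z_k)}\to0$. This proves \eqref{eq:grad-zero} for the whole sequence, not just a subsequence.

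The only step I see as a real obstacle is the bookkeeping. We must ensure the line search is always well defined and that every iterate stays in the sublevel set, so that $M_\tau$ applies uniformly in $k$. That follows from Theorem~\ref{thm:armijo}, since backtracking terminates with strict decrease, combined with the restart check above. The constant $M_\tau$ depends on $\tau$, so the resulting rate is only fixed-$\tau$ uniform. Everything else is routine.
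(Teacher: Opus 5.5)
Your proposal is correct and follows essentially the same route as the paper: Armijo together with \(-g_k^Td_k\ge g_k^TP_k^{-1}g_k\) and \(\alpha_k\ge\underline\alpha\) gives a per-step decrease of \(c_1\underline\alpha\,g_k^TP_k^{-1}g_k\), which is summed using boundedness below of \(\Phi_\tau\) on the compact sublevel set and converted to \(\norm{g_k}^2\) via \(P_k\preceq M_\tau I\); the paper sums the metric quantity first and converts afterwards, which makes no difference. Your explicit check that restart steps \(d_k=-P_k^{-1}g_k\) also satisfy the energy estimate, so that the step lower bound covers them, fills in a small detail the paper leaves implicit.
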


\begin{proof}
From Armijo, Theorem~\ref{thm:descent}, and Corollary~\ref{cor:alpha-lower},
\[
\Phi_\tau(z_{k+1})
\le
\Phi_\tau(z_k)
-
c_1\underline\alpha\,
g_k^TP_k^{-1}g_k.
\]
Because \(\Phi_\tau\) has a finite lower bound on the compact initial sublevel set,
\[
\sum_{k=0}^\infty g_k^TP_k^{-1}g_k<\infty,
\]
hence
\[
g_k^TP_k^{-1}g_k\to0.
\]
Lemma~\ref{lem:spectral} gives
\[
P_k^{-1}\succeq M_\tau^{-1}I,
\]
so
\[
g_k^TP_k^{-1}g_k
\ge
M_\tau^{-1}\norm{g_k}^2.
\]
Therefore \(\norm{g_k}\to0\).
\end{proof}

\begin{corollary}[Fixed-\(\tau\) first-order complexity]
\label{cor:complexity}
Let
\[
\Delta_\tau
=
\Phi_\tau(z_0)
-
\inf_{z\in\cL_\tau(z_0)}\Phi_\tau(z).
\]
Then for every \(N\ge0\),
\begin{equation}
\boxed{
\min_{0\le k\le N}\norm{g_k}^2
\le
\frac{M_\tau\Delta_\tau}
{c_1\underline\alpha(N+1)}.
}
\label{eq:complexity}
\end{equation}
Consequently, \(O(\eps^{-2})\) inner iterations are sufficient to obtain
\(\norm{g_k}\le\eps\).
\end{corollary}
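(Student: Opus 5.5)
The bound follows by telescoping the per-iteration decrease used in the proof of Theorem~\ref{thm:global-conv}. First I record the descent inequality for every iteration $k\le N$, restart iterations included. For a restart, $d_k=-P_k^{-1}g_k$, so $-g_k^Td_k=g_k^TP_k^{-1}g_k$ and $d_k^TP_kd_k=g_k^TP_k^{-1}g_k\le\Gamma_0\,g_k^TP_k^{-1}g_k$ because $\Gamma_0>1$. Hence Theorem~\ref{thm:armijo} and Corollary~\ref{cor:alpha-lower} apply to restarts as well. In all cases the Armijo condition \eqref{eq:armijo}, together with \eqref{eq:descent} and $\alpha_k\ge\underline\alpha$, gives
\[
\Phi_\tau(z_{k+1})\le\Phi_\tau(z_k)-c_1\underline\alpha\,g_k^TP_k^{-1}g_k .
\]

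Second, I convert the metric quantity to the Euclidean norm. By monotone decrease, every $z_k$ with $k\le N$ lies in $\cL_\tau(z_0)$. Lemma~\ref{lem:spectral} then gives $P_k\preceq M_\tau I$, hence $P_k^{-1}\succeq M_\tau^{-1}I$ and $g_k^TP_k^{-1}g_k\ge M_\tau^{-1}\norm{g_k}^2$. Summing the decrease inequality over $k=0,\dots,N$ and telescoping yields
\[
\frac{c_1\underline\alpha}{M_\tau}\sum_{k=0}^N\norm{g_k}^2\le\Phi_\tau(z_0)-\Phi_\tau(z_{N+1})\le\Delta_\tau .
\]
The last inequality holds because $z_{N+1}\in\cL_\tau(z_0)$, and $\Delta_\tau$ is finite since the continuous function $\Phi_\tau$ attains its infimum on the compact set $\cL_\tau(z_0)$. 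Bounding the minimum by the average, $(N+1)\min_{0\le k\le N}\norm{g_k}^2\le\sum_{k=0}^N\norm{g_k}^2$, gives \eqref{eq:complexity}.

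If the algorithm terminates before iteration $N$ because some $\norm{g_k}\le\eps$, the claim concerns only the generated iterates and holds trivially at the stopping index. I would also sum only up to the last generated iterate, since $g_k$ is defined only there. For the complexity consequence, I choose $N+1\ge M_\tau\Delta_\tau/(c_1\underline\alpha\,\eps^2)$. Then some $k\le N$ satisfies $\norm{g_k}\le\eps$, which is $O(\eps^{-2})$ iterations for fixed $\tau$. The constants $M_\tau$ and $\Delta_\tau$ depend on $\tau$, but $\underline\alpha$ does not.

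The main obstacle is bookkeeping rather than analysis. One must check that the restart branch also satisfies the energy bound \eqref{eq:energy}, so that Corollary~\ref{cor:alpha-lower} applies uniformly. One must also check that $z_{N+1}\in\cL_\tau(z_0)$, so that $\Delta_\tau$ controls the total decrease.
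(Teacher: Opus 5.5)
Your proof is correct and is essentially the paper's argument: telescope the per-step decrease $\Phi_\tau(z_{k+1})\le\Phi_\tau(z_k)-c_1\underline\alpha\,g_k^TP_k^{-1}g_k$ from the proof of Theorem~\ref{thm:global-conv}, bound the total by $\Delta_\tau$, convert with $P_k\preceq M_\tau I$, and bound the minimum by the average. Your check that restart steps also satisfy the energy bound, so that Corollary~\ref{cor:alpha-lower} covers them, is bookkeeping the paper leaves implicit; the one loose remark is the early-stop case. If the algorithm stops at an index $K<N$, the displayed bound with $N+1$ does not hold ``trivially'' and can in fact fail. The $O(\eps^{-2})$ conclusion is still unaffected, because the telescoped sum forces the stopping test to fire within fewer than $M_\tau\Delta_\tau/(c_1\underline\alpha\,\eps^2)$ steps.
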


\begin{proof}
Summing the decrease estimate in the proof of Theorem~\ref{thm:global-conv},
\[
\sum_{k=0}^N g_k^TP_k^{-1}g_k
\le
\frac{\Delta_\tau}{c_1\underline\alpha}.
\]
Using
\(g_k^TP_k^{-1}g_k\ge M_\tau^{-1}\norm{g_k}^2\)
gives \eqref{eq:complexity}.
\end{proof}

\begin{remark}
The Armijo lower bound \(\underline\alpha\) is uniform in \(\tau\), but the Euclidean conversion constant \(M_\tau\) in \eqref{eq:Mtau} may grow as \(\tau\downarrow0\).
Thus Corollary~\ref{cor:complexity} is a fixed-\(\tau\) complexity result, not a \(\tau\)-uniform Euclidean gradient complexity statement.
\end{remark}

\section{Continuation and Clarke stationarity of the original minimax problem}
\label{sec:continuation}

Consider smoothing parameters and inner tolerances satisfying
\begin{equation}
\tau_j\downarrow0,
\qquad
\eps_j\downarrow0.
\label{eq:cont-seq}
\end{equation}
Suppose the \(j\)-th smoothed subproblem produces
\(z^j=(x^j,t^j)\) such that
\begin{equation}
\norm{\nabla\Phi_{\tau_j}(z^j)}
\le
\eps_j.
\label{eq:inner-tol}
\end{equation}

\begin{assumption}[Outer accumulation point]
\label{ass:outer}
The outer sequence \(\{z^j\}\) has an accumulation point.
Along a subsequence, still indexed by \(j\),
\[
z^j\to z^\star=(x^\star,t^\star).
\]
\end{assumption}

\begin{theorem}[Clarke stationarity of continuation accumulation points]
\label{thm:clarke}
Under Assumptions~\ref{ass:smooth} and \ref{ass:outer}, if
\eqref{eq:cont-seq}--\eqref{eq:inner-tol} hold, then
\begin{equation}
\boxed{
0\in\partial_C f(x^\star).
}
\label{eq:clarke-stat}
\end{equation}
\end{theorem}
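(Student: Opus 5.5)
The plan is to pass to the limit directly in the gradient formula \eqref{eq:gradient}. Write \(a_i^j=a_i(z^j,\tau_j)\in(0,1)\) and \(r_i^j=f_i(x^j)-t^j\). By \eqref{eq:inner-tol}, the \(t\)-component of the gradient gives \(|1-\sum_i a_i^j|\le\eps_j\). The \(x\)-component gives \(\norm{\sum_i a_i^j\nabla f_i(x^j)}\le\eps_j\). Since the weight vectors \((a_1^j,\ldots,a_m^j)\) lie in the compact cube \([0,1]^m\), I pass to a further subsequence along which \(a_i^j\to\bar a_i\) for every \(i\). In the limit, \(\bar a_i\ge0\) and \(\sum_i\bar a_i=1\). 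By continuity of \(\nabla f_i\) and \(x^j\to x^\star\), we also get \(\sum_i\bar a_i\nabla f_i(x^\star)=0\).

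The key step, and the main obstacle, is to show that \(\bar a_i=0\) for every inactive index \(i\notin I(x^\star)\). Once this holds, \(0\) is a convex combination of \(\{\nabla f_i(x^\star):i\in I(x^\star)\}\), and \eqref{eq:clarke} gives \eqref{eq:clarke-stat}.

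To prove it, I first locate \(t^\star\) relative to \(f(x^\star)\), using the limits \(r_i^j\to r_i^\star:=f_i(x^\star)-t^\star\). Note \(a_i^j=\tfrac12(1+r_i^j/\omega_i^j)\) with \(\omega_i^j=\sqrt{(r_i^j)^2+\tau_j^2}\). Hence, as \(\tau_j\to0\), \(a_i^j\to1\) when \(r_i^\star>0\) and \(a_i^j\to0\) when \(r_i^\star<0\); when \(r_i^\star=0\) the limit \(\bar a_i\in[0,1]\) is undetermined.

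I then rule out \(t^\star<f(x^\star)\) by cases.
\begin{itemize}
\item If at least two indices have \(r_i^\star>0\), then \(\sum_i\bar a_i\ge2\), a contradiction.
\item If exactly one index has \(r_i^\star>0\), that index contributes \(\bar a_i=1\), so all remaining \(\bar a_l\) must vanish. In particular the maximizing index is the unique one with \(r^\star>0\), that is, \(f_i(x^\star)>t^\star\ge f_l(x^\star)\) for \(l\ne i\). Then \(i\) is the only active index, and the conclusion \(\bar a_l=0\) for inactive \(l\) holds directly.
\end{itemize}
So either every \(r_i^\star\le0\), or we are in the single-index case just handled.

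If every \(r_i^\star\le0\), then \(\sum_i\bar a_i=1\) forces some \(r_i^\star=0\), so \(t^\star=\max_i f_i(x^\star)=f(x^\star)\). Consequently every inactive index has \(r_i^\star=f_i(x^\star)-f(x^\star)<0\), and therefore \(\bar a_i=0\).

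Combining the cases, every inactive index has zero limiting weight, which completes the argument. I expect the only delicate point to be the careful bookkeeping of this case analysis on the signs of \(r_i^\star\). It needs only continuity of \(f_i\), not the global Lipschitz constants; Assumption~\ref{ass:smooth} enters only through \(f_i\in C^1\) and the Clarke formula \eqref{eq:clarke}.
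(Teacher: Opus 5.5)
Your proposal is correct and is essentially the paper's proof: you extract limiting weights in $[0,1]^m$ from both gradient components, and then use the sign of $r_i^\star=f_i(x^\star)-t^\star$ to show the limiting weights vanish off $I(x^\star)$ (at most one residual can be positive, and it is then the unique maximizer; otherwise $t^\star=f(x^\star)$). The only blemish is wording: the single-positive-residual case does not rule out $t^\star<f(x^\star)$ but handles it directly, so ``rule out'' should read ``treat''.
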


\begin{proof}
Define
\[
\lambda_i^j
=
\frac12
\left(
1+
\frac{f_i(x^j)-t^j}
{\sqrt{(f_i(x^j)-t^j)^2+\tau_j^2}}
\right).
\]
Then \(0<\lambda_i^j<1\).
By the gradient formula \eqref{eq:gradient} and \eqref{eq:inner-tol},
\[
\sum_i\lambda_i^j\nabla f_i(x^j)\to0,
\qquad
\sum_i\lambda_i^j\to1.
\]
Passing to a further subsequence if necessary, compactness of \([0,1]^m\) gives
\(\lambda_i^j\to\lambda_i\in[0,1]\).
By continuity,
\[
\sum_i\lambda_i=1,
\qquad
\sum_i\lambda_i\nabla f_i(x^\star)=0.
\]

It remains to show that positive limiting weights are supported on the active set.
Let
\[
r_i^\star=f_i(x^\star)-t^\star.
\]
If \(r_i^\star<0\), then \(\lambda_i^j\to0\), hence \(\lambda_i=0\).
If some \(r_i^\star>0\), then the corresponding \(\lambda_i^j\to1\).
Because \(\sum_i\lambda_i=1\), at most one residual can be positive, and that index is the unique maximizer of the component values.
If no residual is positive, then \(f_i(x^\star)\le t^\star\) for all \(i\).
Since the limiting weights sum to one, some \(\lambda_i>0\); such an index cannot have negative residual, hence \(r_i^\star=0\).
Therefore \(t^\star=\max_i f_i(x^\star)\), and every positive limiting weight is active.
In both cases,
\[
\lambda_i>0\quad\Longrightarrow\quad i\in I(x^\star).
\]
Thus
\[
0
=
\sum_{i\in I(x^\star)}
\lambda_i\nabla f_i(x^\star),
\qquad
\lambda_i\ge0,
\qquad
\sum_{i\in I(x^\star)}\lambda_i=1.
\]
Equation \eqref{eq:clarke} implies \eqref{eq:clarke-stat}.
\end{proof}

\section{Curvature-response instances}
\label{sec:instances}

The convergence theory only requires a nonzero response \(b_k\); a curvature interpretation is obtained by choosing
\[
b_k\approx\mathcal H_ks_{k-1},
\]
where \(\mathcal H_k\) represents local curvature.
The following instances keep \(s_{k-1}\) fixed and therefore remain in the recurrence \eqref{eq:three-term}.

\subsection{Instance I: standard secant response}

The default choice is
\begin{equation}
\boxed{
b_k=y_{k-1}=g_k-g_{k-1}.
}
\label{eq:instance-secant}
\end{equation}
When \(\Phi_\tau\) is \(C^2\) along the preceding segment,
\[
y_{k-1}
=
\left[
\int_0^1
\nabla^2\Phi_\tau(z_{k-1}+t s_{k-1})\,\dd t
\right]s_{k-1},
\]
so \((s_{k-1},y_{k-1})\) is a displacement--average-curvature pair.
This instance gives Corollary~\ref{cor:DL} and requires no Hessian information.

\subsection{Instance II: current Hessian-vector response}

If \(\Phi_\tau\in C^2\) and Hessian-vector products are available, one may use
\begin{equation}
\boxed{
b_k^{\rm HV}
=
\nabla^2\Phi_\tau(z_k)s_{k-1}.
}
\label{eq:instance-hv}
\end{equation}
The Hessian can be indefinite; the framework does not require
\(s_{k-1}^Tb_k^{\rm HV}>0\).
If \(b_k^{\rm HV}=0\), one may fall back to the secant response or restart.

\subsection{Instance III: Hessian-free finite-difference response}

For a small \(\epsilon_k>0\),
\begin{equation}
\boxed{
b_k^{\rm FD}
=
\frac{
\nabla\Phi_\tau(z_k+\epsilon_ks_{k-1})
-
\nabla\Phi_\tau(z_k)
}{\epsilon_k}.
}
\label{eq:instance-fd}
\end{equation}
When the Hessian is locally continuous,
\(b_k^{\rm FD}\approx\nabla^2\Phi_\tau(z_k)s_{k-1}\).
This instance exchanges an additional gradient evaluation for local curvature information.

\subsection{Instance IV: quasi-Newton curvature response}

Let \(H_k^{\rm QN}\) be a Hessian approximation generated by a limited-memory or structured update.
Use
\begin{equation}
\boxed{
b_k^{\rm QN}
=
H_k^{\rm QN}s_{k-1}.
}
\label{eq:instance-qn}
\end{equation}
The matrix is used only to supply a curvature response; the base direction remains
\(-P_k^{-1}g_k\).
For the convergence theory above, the only algebraic requirement on the selected response is \(b_k^{\rm QN}\neq0\).

\subsection{Instance V: historical curvature predictor}

Let
\[
S_k^-=[s_{k-2},\ldots,s_{k-\ell-1}],
\qquad
Y_k^-=[y_{k-2},\ldots,y_{k-\ell-1}],
\]
using only older pairs.
A regularized predictor coefficient may be defined by
\[
c_k
=
\arg\min_c
\left\{
\norm{P_k^{1/2}(S_k^-c-s_{k-1})}^2
+
\lambda_c\norm c^2
\right\},
\qquad \lambda_c>0,
\]
and the response by
\begin{equation}
\boxed{
b_k^{\rm MS}
=
Y_k^-c_k.
}
\label{eq:instance-ms}
\end{equation}
If the old gradient differences are approximately generated by a slowly varying local curvature operator, then
\(b_k^{\rm MS}\) predicts the response associated with the current displacement.

\begin{remark}[Iteration-dependent response selection]
The rule generating \(b_k\) may change from one iteration to the next.
For each selected nonzero \(b_k\), one simply recomputes
\(B_k,\Delta_k,\chi_k,\cD_k,\varpi_k\), and \(\mu_k^\star\).
The denominator, descent, energy, Armijo, and fixed-\(\tau\) convergence proofs do not require one fixed response generator.
In particular, \(s_{k-1}^Tb_k>0\) is not assumed.
\end{remark}

\begin{remark}[Curvature interpretation versus convergence]
The modeling relation
\[
b_k\approx\mathcal H_ks_{k-1}
\]
is not a convergence assumption.
It is a design principle that gives the otherwise algebraically admissible response \(b_k\) a meaningful curvature interpretation.
\end{remark}

\section{Large-scale implementation}
\label{sec:implementation}

Although \(P_k\) is an \((n+1)\times(n+1)\) matrix, it need not be formed explicitly.
Set
\[
A
=
\delta I_{n+1}+\bar L\Pi_x
=
\diag((\delta+\bar L)I_n,\delta),
\qquad
U_k
=
\frac1{\sqrt2}(D_k^{\rm hyp})^{1/2}J_k.
\]
Then
\begin{equation}
P_k=A+U_k^TU_k,
\qquad
\operatorname{rank}(U_k^TU_k)\le m.
\label{eq:low-rank}
\end{equation}
The Woodbury identity gives
\begin{equation}
\boxed{
P_k^{-1}
=
A^{-1}
-
A^{-1}U_k^T
\left(
I_m+U_kA^{-1}U_k^T
\right)^{-1}
U_kA^{-1}.
}
\label{eq:woodbury}
\end{equation}
Thus when \(m\ll n\) or \(m\) is moderate, both
\(P_k^{-1}g_k\) and \(P_k^{-1}b_k\) can reuse one factorization of an \(m\times m\) SPD system.
When both \(m\) and \(n\) are large,
\[
P_kv
=
Av+\frac12J_k^T\bigl(D_k^{\rm hyp}(J_kv)\bigr)
\]
can be computed matrix-free.
The default secant instance requires component gradients but no component Hessians.

\section{Numerical experiments}
\label{sec:numerics}

\subsection{Experimental design}

\textit{This section is intentionally left incomplete in the present theoretical draft.}
The numerical study should at least compare:
\begin{itemize}
\item the baseline secant method \((b_k=y_{k-1},\mu_k=0)\);
\item the enhanced secant method \((b_k=y_{k-1},\mu_k=\mu_k^\star)\);
\item one or more variable-curvature instances from Section~\ref{sec:instances};
\item representative finite-minimax smoothing CG methods such as Pang--Du--Ju and Guo--Wan;
\item CG-DESCENT applied to the same smoothing model;
\item where computationally reasonable, hyperbolic Newton and hyperbolic modified L-BFGS baselines.
\end{itemize}
Suggested measures include iterations, function/gradient evaluations, accepted step lengths, CPU time, final smoothing stationarity residual, and an outer Clarke-stationarity surrogate.

\subsection{Test problems and parameter settings}

\textit{To be completed.}

\subsection{Results}

\textit{To be completed. No numerical claims are made in this draft.}

\section{Further remarks and a general pair extension}

\begin{remark}[The preconditioner need not be abstracted in the main paper]
The three-term algebra only requires an SPD metric, but the complete global theory uses the specific hyperbolic majorizer \(P_k\).
Keeping this problem-derived metric fixed avoids turning the paper into a generic preconditioned-CG theory and preserves the direct link between smoothing curvature, direction geometry, and Armijo globalization.
\end{remark}

\begin{remark}[Why the main framework fixes \(s_{k-1}\)]
Allowing both history vectors to vary is mathematically possible, but fixing
\(s_{k-1}=\alpha_{k-1}d_{k-1}\)
ensures the recognizable three-term NCG form \eqref{eq:three-term}.
The broader extension is stated next.
\end{remark}

\begin{theorem}[General \((a_k,b_k)\) metric-pair extension]
\label{thm:general-pair}
Let \(a_k\neq0\), \(b_k\neq0\), and define
\[
A_k=a_k^TP_ka_k,
\qquad
B_k=b_k^TP_k^{-1}b_k,
\qquad
\Delta_k=a_k^Tb_k,
\qquad
\chi_k=\frac{\Delta_k}{\sqrt{A_kB_k}},
\]
\[
\cD_k
=
\Delta_k+\vartheta\sqrt{A_kB_k}.
\]
Consider
\begin{equation}
\begin{aligned}
d_k={}&-P_k^{-1}g_k\\
&+
\left[
\frac{g_k^TP_k^{-1}b_k}{\cD_k}
-\mu_k\frac{g_k^Ta_k}{A_k}
\right]a_k
-
\frac{g_k^Ta_k}{\cD_k}P_k^{-1}b_k.
\end{aligned}
\label{eq:general-pair}
\end{equation}
Then:
\begin{enumerate}[label=(\alph*),leftmargin=8mm]
\item
\(\cD_k\ge(\vartheta-1)\sqrt{A_kB_k}>0\);
\item
\[
g_k^Td_k
=
-g_k^TP_k^{-1}g_k
-\mu_k\frac{(g_k^Ta_k)^2}{A_k};
\]
\item with
\[
\widetilde b_k
=
b_k+\vartheta\sqrt{\frac{B_k}{A_k}}\,P_ka_k,
\]
one has
\[
d_k^T\widetilde b_k
=
-\Theta_k g_k^Ta_k,
\]
where
\[
\Theta_k
=
\frac{\widetilde b_k^TP_k^{-1}\widetilde b_k}
{a_k^T\widetilde b_k}
+
\mu_k
\frac{a_k^T\widetilde b_k}{A_k}
>0;
\]
\item defining
\[
\varpi_k^2
=
\frac{1-\chi_k^2}{(\vartheta+\chi_k)^2}
\]
and \(\mu_k^\star\) by \eqref{eq:mu-star}, every
\(0\le\mu_k\le\mu_k^\star\) satisfies
\[
d_k^TP_kd_k
\le
\Gamma_0 g_k^TP_k^{-1}g_k
\]
and hence \eqref{eq:eta};
\item consequently, the same \(\tau\)-uniform Armijo bound and fixed-\(\tau\) convergence proof apply.
\end{enumerate}
\end{theorem}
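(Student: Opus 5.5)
The plan is to observe that nothing in Sections~\ref{sec:direction}--\ref{sec:algorithm} used the relation \(s_{k-1}=\alpha_{k-1}d_{k-1}\) except Proposition~\ref{prop:three-term}. All of the other algebra used only three facts: \(s_{k-1}\neq0\), \(b_k\neq0\), and \(P_k\) is SPD. So each item of the theorem will be obtained by replacing \(s_{k-1}\) with \(a_k\) and \(C_k\) with \(A_k\) in the earlier proofs and checking that each step survives.

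For (a), I repeat Lemma~\ref{lem:den-pos}. The bound \(|a_k^Tb_k|=|(P_k^{1/2}a_k)^T(P_k^{-1/2}b_k)|\le\sqrt{A_kB_k}\) gives \(\chi_k\in[-1,1]\), and then \(\cD_k=\sqrt{A_kB_k}(\vartheta+\chi_k)\ge(\vartheta-1)\sqrt{A_kB_k}\). Positivity follows because \(A_k>0\) and \(B_k>0\) for nonzero vectors and an SPD \(P_k\).

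For (b), I premultiply \eqref{eq:general-pair} by \(g_k^T\). The two mixed terms \(\pm(g_k^TP_k^{-1}b_k)(g_k^Ta_k)/\cD_k\) cancel, exactly as in Theorem~\ref{thm:descent}.

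For (c), I first check that \(a_k^T\widetilde b_k=\Delta_k+\vartheta\sqrt{B_k/A_k}\,A_k=\cD_k>0\). I then write the \(\mu_k=0\) direction with \(\widetilde b_k\) in place of \(b_k\) and \(\cD_k\) as the denominator, as in the proof of Theorem~\ref{thm:conjugacy}. For this rewrite I need
\[
(P_k^{-1}g_k)^T\widetilde b_k\,a_k-(g_k^Ta_k)P_k^{-1}\widetilde b_k
=(g_k^TP_k^{-1}b_k)\,a_k-(g_k^Ta_k)P_k^{-1}b_k .
\]
This identity holds because the extra contributions, \(\vartheta\sqrt{B_k/A_k}\,(g_k^Ta_k)a_k\) from the first term and \(\vartheta\sqrt{B_k/A_k}\,(g_k^Ta_k)a_k\) from the second, cancel. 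Taking the inner product with \(\widetilde b_k\), the \(a_k\)-term contributes \((g^TP^{-1}\widetilde b)\,\cD_k/\cD_k\), which cancels the \(-g^TP^{-1}\widetilde b\) coming from \(-P_k^{-1}g_k\). What remains is \(-\widetilde b_k^TP_k^{-1}\widetilde b_k\,(g_k^Ta_k)/\cD_k\). The \(\mu_k\) term then adds \(-\mu_k(g_k^Ta_k)\cD_k/A_k\), which gives \(\Theta_k\). Both summands of \(\Theta_k\) are nonnegative and the first is strictly positive, since \(\widetilde b_k\neq0\) follows from \(a_k^T\widetilde b_k>0\). Hence \(\Theta_k>0\).

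For (d), I set \(h=P_k^{-1/2}g_k\), \(u=P_k^{1/2}a_k/\sqrt{A_k}\) and \(v=P_k^{-1/2}b_k/\sqrt{B_k}\), so that \(u^Tv=\chi_k\). Expanding, the \(a_k\)-coefficient term gives \(\sqrt{B_k}/\cD_k=1/(\sqrt{A_k}(\vartheta+\chi_k))\), and this yields exactly \(P_k^{1/2}d_k=-M(\mu_k)h\) with \(M\) as in \eqref{eq:Mmu}. From that point, Lemma~\ref{lem:varpi}, Theorem~\ref{thm:mu-star}, Corollary~\ref{cor:partial-mu} and Theorem~\ref{thm:energy-descent} apply verbatim, since they depend only on the unit vectors \(u,v\) and on \(\chi\).

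The main obstacle I anticipate is in (e). Theorem~\ref{thm:armijo} and Corollary~\ref{cor:alpha-lower} need only (b), (d) and Corollary~\ref{cor:ray-major}, so they transfer immediately. The convergence proof of Theorem~\ref{thm:global-conv} likewise uses only the descent bound, \(\underline\alpha\), and Lemma~\ref{lem:spectral}, and the last of these only needs monotone decrease. I will state explicitly that the three-term recurrence form \eqref{eq:three-term} is lost, but that no convergence step relies on it.
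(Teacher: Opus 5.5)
Your proposal is correct and follows essentially the same route as the paper: you substitute \(a_k\) for \(s_{k-1}\) (and \(A_k\) for \(C_k\)) in the Section~\ref{sec:direction} Cauchy--Schwarz and cancellation arguments, whiten with \(u_k=P_k^{1/2}a_k/\sqrt{A_k}\) and \(v_k=P_k^{-1/2}b_k/\sqrt{B_k}\) to recover \(P_k^{1/2}d_k=-M(\mu_k)h\) and reuse Theorem~\ref{thm:mu-star}, and then invoke Theorems~\ref{thm:armijo} and~\ref{thm:global-conv}. Your explicit checks of the \(\widetilde b_k\) rewrite in (c) and of the coefficient \(\sqrt{B_k}/\cD_k=1/(\sqrt{A_k}(\vartheta+\chi_k))\) in (d) just fill in details the paper leaves as ``repeat the arguments,'' and your implicit use of \(\mu_k\ge0\) for \(\Theta_k>0\) matches the paper's standing convention.
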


\begin{proof}
Parts (a)--(c) repeat the metric Cauchy--Schwarz and cancellation arguments of Section~\ref{sec:direction} with \(s_{k-1}\) replaced by \(a_k\).
For part (d), use the whitened unit vectors
\[
u_k=\frac{P_k^{1/2}a_k}{\sqrt{A_k}},
\qquad
v_k=\frac{P_k^{-1/2}b_k}{\sqrt{B_k}}.
\]
The matrix representation is again \eqref{eq:Mmu}, and the proof of Theorem~\ref{thm:mu-star} is unchanged.
Part (e) follows from Theorem~\ref{thm:armijo} and Theorem~\ref{thm:global-conv}.
\end{proof}

\begin{remark}
If \(a_k\not\parallel d_{k-1}\), the direction in Theorem~\ref{thm:general-pair} is better viewed as a metric three-vector extension rather than a classical three-term NCG recurrence.
This is why the main algorithm uses \(a_k=s_{k-1}\) and reserves Theorem~\ref{thm:general-pair} as a structural extension.
\end{remark}

\section{Conclusion}

We developed a hyperbolic-majorization preconditioned three-term NCG framework for nonconvex finite minimax optimization.
The analytic metric \(P_k\) is not an externally chosen preconditioner: it is derived from a global upper model of the hyperbolic smoothing objective and therefore links smoothing curvature, search geometry, and Armijo globalization.
Within this metric, a fixed-displacement/variable-curvature direction admits an adaptive energy-budget enhancement that preserves the baseline worst-case Armijo constant while adding a nonnegative extra descent contribution.
The resulting theory gives regularized conjugacy, sufficient descent, a \(\tau\)-uniform line-search bound, fixed-\(\tau\) first-order convergence and complexity, and continuation to Clarke stationarity.
The numerical evaluation and the comparative behavior of the different curvature-response instances remain to be completed.

\end{document}